\newcommand{\no}[1]{#1}
\renewcommand{\no}[1]{}
\renewcommand{\Delta}{\upDelta}}
\newtheorem{theorem}{Theorem}[section]
\newtheorem{prop}{Proposition}[section]
\newtheorem{lem}{Lemma}[section]
\theoremstyle{remark}
\newcommand{\bel}{\begin{equation} \label}
\newcommand{\ee}{\end{equation}}
\newcommand{\R}{{\mathbb R}}
\def\phi {\varphi}
\renewcommand{\leq}{\leqslant}
\renewcommand{\geq}{\geqslant}
\def\beq{\begin{equation}}
\def\eeq{\end{equation}}
\newcommand{\bea}{\begin{eqnarray}}
\newcommand{\eea}{\end{eqnarray}}
\newcommand{\beas}{\begin{eqnarray*}}
\newcommand{\eeas}{\end{eqnarray*}}
\providecommand{\abs}[1]{\left\lvert#1\right\rvert}
\providecommand{\norm}[1]{\left\lVert#1\right\rVert}
\numberwithin{equation}{section}
\title[Stable  determination of nonlinear terms]{Lipschitz and H\"older stable  determination of nonlinear terms for  elliptic equations}
\begin{document}
\begin{abstract} We consider the inverse problem of determining some class of nonlinear terms appearing in an elliptic equation from boundary measurements. More precisely, we study the stability issue for this class of inverse problems. Under suitable assumptions, we prove a Lipschitz and a H\"older stability estimate associated with the determination of quasilinear and semilinear terms appearing in this class of elliptic equations from measurements restricted to an arbitrary parts of the boundary of the domain. Besides their mathematical interest, our stability estimates can be useful for improving numerical reconstruction of this class of nonlinear terms.  Our approach combines the linearization technique with applications of suitable class of singular solutions.

\medskip
\noindent
{\bf  Keywords:} Inverse problem, Nonlinear elliptic equations, 
Stability estimate, Singular solutions.\\

\medskip
\noindent
{\bf Mathematics subject classification 2020 :} 35R30, 35J61, 35J62.
\end{abstract}


\author[Yavar Kian]{Yavar Kian}
\address{Aix Marseille Univ, Universit\'e de Toulon, CNRS, CPT, Marseille, France.}
\email{yavar.kian@univ-amu.fr}

\maketitle


\section{Introduction}

\subsection{Statement}
Let $\Omega$ be a bounded domain of $\mathbb{R}^n$, $n\geq 2$, with $\mathcal C^{2+\alpha}$, $\alpha\in(0,1)$, boundary.  Let  $\gamma\in \mathcal C^3(\R;(0,+\infty))$, $F\in \mathcal C^2(\overline{\Omega}\times\R\times\R^n;\R)$ and let $a:=(a_{i,j})_{1 \leq i,j \leq n} \in \mathcal C^2(\overline{\Omega};\R^{n^2})$
be symmetric, that is 
$$ 
a_{i,j}(x)=a_{j,i}(x),\ x \in \Omega,\ i,j = 1,\ldots,n, 
$$
and let $a$ fulfill the ellipticity condition: there exists a constant
$c>0$ such that
\bel{ell}
\sum_{i,j=1}^d a_{i,j}(x) \xi_i \xi_j \geq c |\xi|^2, \quad 
\mbox{for each $x \in \overline{\Omega},\ \xi=(\xi_1,\ldots,\xi_n) \in \R^n$}.
\ee
We consider the following boundary value problem
\bel{eq1}
\left\{
\begin{array}{ll}
-\sum_{i,j=1}^d \partial_{x_i} 
\left( a_{i,j}(x)\gamma(u(x)) \partial_{x_j} u(x) \right)+F(x,u(x),\nabla u(x))=0  & \mbox{in}\ \Omega ,
\\
u=g &\mbox{on}\ \partial\Omega,
\end{array}
\right.
\ee
with   $g\in \mathcal C^{2+\alpha}(\partial\Omega)$. We assume here  that the nonlinear terms $\gamma$, $F$  satisfy one of the following conditions:\\
 (i) $F$ takes the form
\bel{non1} F(x,u(x),\nabla u(x))=D(x,u(x))\cdot\nabla u(x),\quad x\in\Omega,\ee
with  $D\in \mathcal C^2(\overline{\Omega}\times\R;\R^n)$.\\
 (ii) $\gamma=1$ and
\bel{non2} \quad F(x,u(x),\nabla u(x))=G(u(x)),\quad x\in\Omega,\ee
with $G\in \mathcal C^2(\R)$ a non-decreasing function.

Under some suitable assumptions, that will be specified later, we prove that the problem \eqref{eq1}  admits a unique and sufficiently smooth solution $u_g$. Then, fixing $S$ an arbitrary open subset of $\partial\Omega$ we associate with problem \eqref{eq1}  the partial Dirichlet-to-Neumann (DN in short) map
$$\mathcal N_{\gamma,F}:g\mapsto \gamma(u_g)\partial_{\nu_a} u_g|_{S}$$
with  $\nu(x)=(\nu_1(x),\ldots,\nu_n(x))$ the outward unit normal to $\partial\Omega$ computed at $x \in \partial\Omega$ and
$$\partial_{\nu_a} u(x)=\sum_{i,j=1}^na_{i,j}(x)\partial_{x_j}u(x)\nu_i(x),\quad x\in\partial\Omega.$$
We study the inverse problem of determining the nonlinear term $\gamma$  when condition (i) is fulfilled or $G$  when condition (ii) is fulfilled from some knowledge of $\mathcal N_{\gamma,F}$. More precisely, we are looking for stability result for this inverse problem.
\subsection{Motivations}

Let us observe that the nonlinear equations under consideration in \eqref{eq1} can be associated with different physical phenomenon which can not be described by classical linear elliptic equations. For instance, we can mention  physical models  of viscous flows \cite{GR} or plasticity phenomena \cite{CS} where the transfer from voltage to current density can not be described by the classical Ohm's law but some more general nonlinear expression with a conductivity $\gamma$ depending on the voltage. The solutions of problem \eqref{eq1} can also be seen as  stationary solutions of nonlinear Fokker-Planck or reaction-diffusion equations where the nonlinearity can arise from cooperative interactions between the subsystems of many-body systems (see e.g. \cite{F}), complex mixing phenomena (e.g. \cite{NW}) or models appearing in  combustion theory (e.g. \cite{ZF}). In this context, the goal of our inverse problem is to determine the nonlinear law associated with the quasilinear term $\gamma$ or the semilinear term $F$ from measurements of the flux, localized at some arbitrary parts of the boundary $\partial\Omega$, associated with different excitation of the system (voltage, heat source...) applied at the boundary $\partial\Omega$. More precisely, we are looking for Lipschitz or H\"older stability estimates for this class of inverse problems which can be useful tools for improving the precision of   numerical reconstruction (see e.g. \cite{CY}).
\subsection{Known results}

The determination of nonlinear terms appearing in elliptic equations is an important class of inverse problems which has been intensively studied these last decades. One of the first important results for these problems can be found in \cite{Is5,Is4} (see also \cite{ImYa}) where the authors considered the unique determination of a general semilinear term of the form $F(x,u)$, $x\in\Omega$, $u\in\R$, by applying the method of linearization initiated by \cite{Is1}. This approach has been extended to the unique determination of quasilinear terms  by \cite{EPS1,EPS2,MU,Sh,Sun1,SuUh} and to the determination of semilinear terms of the form $F(u,\nabla u)$ by \cite{Is2,Is3}. More recently, this class of inverse problems received an important attention with different contributions obtained by mean of the multiple order linearization approach introduced by \cite{KLU}. In that category, without being exhaustive, we can mention the works of \cite{FO20,KKU,KU0,LLLS} who have considered the unique determination of semilinear terms of the form $F(x,u)$, $x\in\Omega$, $u\in\R$ and the works of \cite{CFKKU,KKU} dealing with the unique determination of general quasilinear terms of the form $\gamma(x,u,\nabla u)$, $x\in\Omega$. 

All the above mentioned results have been stated in terms of uniqueness results. In contrast to the important development of this topic in terms of uniqueness results, only few authors considered the stability issue for these problems. For elliptic equations, we are only aware of the works  \cite{CHY,LLLS} where the authors proved a logarithmic stability estimate for the determination of semilinear terms of the form  $F(u)$, $u\in\R$, with $F$ a sufficiently smooth function or  semilinear terms of the form $F(x,u)=q(x)u^m$,  $x\in\Omega$, $u\in\R$, where $m$ is a known integer and the parameter $q$ is unknown. We mention also the recent work of \cite{LLPT} where the authors studied the stable determination of semilinear terms appearing in nonlinear hyperbolic equations. To the best of our knowledge there is no result available in the mathematical literature showing Lipschitz or H\"older stability estimate for the determination of nonlinear terms appearing in nonlinear elliptic equations.

\subsection{Main results}
In order to state our main results we will start by recalling some properties of the solution $u$ of \eqref{eq1} and the associated partial DN map $\mathcal N_{\gamma,F}$. Assuming that condition (i) is fulfilled, we prove in  Proposition \ref{p1},  that for all constant $\lambda\in\R$ there exists $\epsilon_\lambda>0$, depending on $a$, $\gamma$, $F$, $\lambda$ and $\Omega$, such that for $g=\lambda+h$, with $h\in \{f\in\mathcal C^{2+\alpha}(\partial\Omega):\ \norm{h}_{\mathcal C^{2+\alpha}(\partial\Omega)}<\epsilon_\lambda\}$, problem \eqref{eq1} admits a unique solution $u_g\in \mathcal C^{2+\alpha}(\overline{\Omega})$. 
In addition, we show in Lemma \ref{l1} that the partial DN map $\mathcal N_{\gamma,F}$ admits a Fr\'echet derivative  at $\lambda$ which allows us to consider the map
\bel{map}\mathcal C^{2+\alpha}(\partial\Omega)\ni h\mapsto\lim_{\epsilon\to0}\frac{\mathcal N_{\gamma,F}(\lambda+\epsilon h)-\mathcal N_{\gamma,F}(\lambda)}{\epsilon}\ee
as a bounded linear map from $\mathcal C^{2+\alpha}(\partial\Omega)$ to $\{f|_S:\ f\in\mathcal C^{1+\alpha}(\partial\Omega)\}$. We prove also that this map can be extended uniquely by density to a continuous linear map from $ H^{\frac{1}{2}}(\partial\Omega)$ to $H^{-\frac{1}{2}}(S)$. Using this property and fixing $\mathcal H_S$ the subset of $\mathcal C^{2+\alpha}(\partial\Omega)$ defined by
$$\mathcal H_S:=\{f\in \mathcal C^{2+\alpha}(\partial\Omega):\ \norm{f}_{H^{\frac{1}{2}}(\partial\Omega)}\leq 1,\ \textrm{supp}(f)\subset S\},$$
 we show in Section 2 that the map
\bel{mesure}\R\ni\lambda\mapsto\sup_{h\in\mathcal H_S}\limsup_{\epsilon\to0}\frac{\norm{\mathcal N_{\gamma,F}(\lambda+\epsilon h)-\mathcal N_{\gamma,F}(\lambda)}_{H^{-\frac{1}{2}}(S)}}{\epsilon}\ee
is lying in $\mathcal C(\R)$. We will use this last map in terms of measurements  for our first stability result, where we treat the determination of the quasilinear term $\gamma$ from the above knowledge of $\mathcal N_{\gamma,F}$.

\begin{theorem}\label{t1}   For $j=1,2$, let $\gamma_j\in \mathcal C^3(\R;(0,+\infty))$ and $D_j\in C^2(\overline{\Omega}\times\R)$ and consider $F_j$ defined by \eqref{non1} with $D=D_j$. We assume that
\bel{t11a}\nabla_x\cdot D_j(x,t)\leq 0,\quad x\in\Omega,\ t\in\R,\ j=1,2.\ee
Then, fixing $\mathcal N=\mathcal N_{\gamma_1,F_1}-\mathcal N_{\gamma_2,F_2}$,  there exists a constant $C>0$, depending only on $a$, $\Omega$, $S$  such that the following estimate 
\bel{t1c}\norm{\gamma_1-\gamma_2}_{L^\infty(-R,R)}\leq C\sup_{\lambda\in[-R,R]}\sup_{h\in\mathcal H_S}\limsup_{\epsilon\to0}\frac{\norm{\mathcal N(\lambda+\epsilon h)-\mathcal N(\lambda)}_{H^{-\frac{1}{2}}(S)}}{\epsilon},\quad R>0,\ee
holds true.\end{theorem}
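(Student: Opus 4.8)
The plan is to reduce the nonlinear problem to a linear inverse problem by linearization at a constant boundary datum $\lambda$, exactly as set up in the excerpt, and then to exploit singular solutions of the resulting linearized elliptic operator to extract the pointwise value $\gamma_1(\lambda)-\gamma_2(\lambda)$. First I would observe that, by Proposition \ref{p1} and Lemma \ref{l1}, for a fixed constant $\lambda\in[-R,R]$ the solution $u_g$ with $g=\lambda+\epsilon h$ is, to first order in $\epsilon$, $u_g=\lambda+\epsilon v_h+o(\epsilon)$ where $v_h$ solves the linearized equation. Since the background solution is the constant $\lambda$, the linearization of $-\sum_{i,j}\partial_{x_i}(a_{i,j}\gamma(u)\partial_{x_j}u)+D(x,u)\cdot\nabla u=0$ at $u\equiv\lambda$ is the linear elliptic equation
\bel{plan1}
-\gamma(\lambda)\sum_{i,j=1}^n\partial_{x_i}\!\left(a_{i,j}(x)\partial_{x_j}v_h\right)+D(x,\lambda)\cdot\nabla v_h=0\quad\text{in }\Omega,\qquad v_h=h\text{ on }\partial\Omega.
\ee
Dividing by $\gamma(\lambda)$, the Fréchet derivative of $\mathcal N_{\gamma_j,F_j}$ at $\lambda$ is the partial DN map of the operator $L_j^\lambda v:=-\sum_{i,j}\partial_{x_i}(a_{i,j}\partial_{x_j}v)+\gamma_j(\lambda)^{-1}D_j(x,\lambda)\cdot\nabla v$, multiplied by the scalar $\gamma_j(\lambda)$ (the extra factor $\gamma(u_g)$ in the definition of $\mathcal N$ contributes $\gamma(\lambda)$ at leading order). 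The right-hand side of \eqref{t1c} therefore controls, uniformly in $\lambda\in[-R,R]$, the norm of the difference of these two linearized partial DN maps as operators from (the relevant subspace of) $H^{1/2}(\partial\Omega)$ to $H^{-1/2}(S)$.

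Next I would bring in the singular solutions. For the scalar linear operator $L=-\sum_{i,j}\partial_{x_i}(a_{i,j}\partial_{x_j}\cdot)+b\cdot\nabla$ with $b\in C^1$ and $\nabla\cdot b\le 0$ (which holds here since $\nabla_x\cdot D_j\le 0$ by \eqref{t11a} and $\gamma_j(\lambda)>0$), one has a well-posed Dirichlet problem, and for a point $y$ outside $\overline\Omega$ one constructs solutions $w_y$ of $Lw_y=0$ in $\Omega$ that blow up like the fundamental solution of the principal part as $y$ approaches a boundary point $x_0\in S$ — this is the classical singular/complex-geometrical-optics-type construction (as in the Sylvester–Uhlmann / Alessandrini circle of ideas, here adapted to variable leading coefficient $a$ and a drift term). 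Testing the difference of the two linearized equations against such singular solutions, and using the boundary identity that the difference of DN maps encodes $\int_\Omega (\text{coefficient difference})\,w\,\tilde w$, one obtains an integral identity whose singular part, as $y\to x_0$, isolates the difference of the zeroth-order data at $x_0$. Since the leading coefficients $a_{i,j}$ are the \emph{same} for $j=1,2$, the only discrepancy in the principal parts of $L_1^\lambda$ and $L_2^\lambda$ is absent, and the singular solutions for $L_1^\lambda$ and $L_2^\lambda$ have the same leading singularity; the residual term in the integral identity is driven by the difference of the drift coefficients $\gamma_1(\lambda)^{-1}D_1(x,\lambda)-\gamma_2(\lambda)^{-1}D_2(x,\lambda)$, but more importantly by the scalar mismatch $\gamma_1(\lambda)\ne\gamma_2(\lambda)$ in front of the DN maps, which produces a term of the strongest singular order. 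Extracting the coefficient of that top-order singularity yields
\bel{plan2}
|\gamma_1(\lambda)-\gamma_2(\lambda)|\le C\,\big\|\,\text{(difference of linearized partial DN maps at }\lambda)\,\big\|,
\ee
with $C$ depending only on $a$, $\Omega$, $S$, uniformly for $\lambda\in[-R,R]$ (the uniformity coming from the uniform ellipticity of $a$ and the fact that the singular-solution estimates depend on $a,\Omega,S$ alone).

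Finally I would assemble the pieces: the right-hand side of \eqref{plan2} is, up to the harmless factor coming from $\gamma_j(\lambda)>0$ being bounded below — which can be absorbed because if $\gamma_1(\lambda)$ and $\gamma_2(\lambda)$ were both large the estimate is vacuous, or more carefully by a standard bootstrap using that $\gamma_j\in C^3$ and $\gamma_j>0$ — bounded by $\sup_{h\in\mathcal H_S}\limsup_{\epsilon\to0}\epsilon^{-1}\|\mathcal N(\lambda+\epsilon h)-\mathcal N(\lambda)\|_{H^{-1/2}(S)}$, which is exactly the quantity appearing in \eqref{t1c}. Taking the supremum over $\lambda\in[-R,R]$ and then the supremum over $\lambda$ on the left (using continuity of $\lambda\mapsto\gamma_1(\lambda)-\gamma_2(\lambda)$ to pass from pointwise bounds to the $L^\infty(-R,R)$ norm) gives \eqref{t1c}. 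I expect the main obstacle to be the careful construction of the singular solutions for the \emph{variable-coefficient} operator with a first-order drift term and the precise asymptotic analysis near the boundary point $x_0\in S$ that lets one read off $\gamma_1(\lambda)-\gamma_2(\lambda)$ as the leading singular coefficient while controlling the lower-order contributions (including the drift mismatch) by the same norm — together with verifying that all constants and all singular-solution estimates are uniform in $\lambda$ over the compact interval $[-R,R]$, which is what upgrades a family of pointwise-in-$\lambda$ estimates into the stated $L^\infty$ bound.
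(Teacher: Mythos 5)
Your overall strategy is the paper's: linearize at the constant datum $\lambda$, identify the Fr\'echet derivative of $\mathcal N_{\gamma_j,F_j}$ with the partial DN map of $-\gamma_j(\lambda)\sum\partial_{x_i}(a_{i,j}\partial_{x_j}\cdot)+D_j(\cdot,\lambda)\cdot\nabla$, and then drive an Alessandrini-type integral identity with singular solutions whose pole $y_\tau$ approaches a point $x_0\in S$ from outside $\overline\Omega$. That is exactly how the paper proceeds, and the reduction in your first paragraph matches \eqref{l1b} and \eqref{t1d}.

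There are, however, two places where your sketch as written would not deliver the stated conclusion. First, your handling of the scalar factors $\gamma_j(\lambda)$: you propose to ``divide by $\gamma(\lambda)$'' so that the linearized DN map becomes $\gamma_j(\lambda)$ times the DN map of a normalized operator, and then to absorb this factor by the remark that the estimate is ``vacuous'' if both $\gamma_j(\lambda)$ are large, ``or more carefully by a standard bootstrap.'' This does not work: any argument that normalizes by $\gamma_j(\lambda)$ produces a constant depending on $\inf\gamma_j$ or $\sup\gamma_j$, contradicting the claim that $C$ depends only on $a$, $\Omega$, $S$ (and indeed the theorem's point that it is \emph{not} a conditional estimate). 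The paper avoids this entirely: it keeps $s=\gamma_j(\lambda_R)$ as the diffusion coefficient, writes $w_\tau=w_{2,\tau}-w_{1,\tau}$, tests against a singular solution $w^*_{2,\tau}$ of the \emph{adjoint} of the second operator (the adjoint is needed because of the drift), and obtains the identity \eqref{t1f} in which $\gamma_1(\lambda_R)-\gamma_2(\lambda_R)$ multiplies the Dirichlet energy $\int_\Omega a\nabla w_{1,\tau}\cdot\nabla w^*_{2,\tau}\,dx\sim\tau^{2-n}$, the drift mismatch contributes only $O(\tau^{3-n})$, and the right-hand side is bounded by $\|\Lambda^1_{S,\lambda_R}-\Lambda^2_{S,\lambda_R}\|\,\|g^1_\tau\|^2_{H^{1/2}}\le C_*\|\cdot\|\tau^{2-n}$ with $C_*$ depending only on $a,\Omega$. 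The Lipschitz rate and the universality of $C$ come precisely from the fact that the two $\tau^{2-n}$ orders match while all $\gamma_j$- and $D_j$-dependent constants sit in front of strictly lower-order powers of $\tau$ and die after dividing by $\tau^{2-n}$ and sending $\tau\to0$; your proposal asserts the conclusion of this computation without the order bookkeeping that justifies it. Second, the classical Alessandrini singular solutions do not have boundary trace supported in $S$, yet the right-hand side of \eqref{t1c} only sees data in $\mathcal H_S$; the paper must correct the parametrix $P(\cdot,y_\tau)$ by a harmonic-type function $w^0_\tau$ on an enlarged domain $\Omega'$ with $(\partial\Omega\setminus S')\subset\partial\Omega'$ so that $g^1_\tau=P(\cdot,y_\tau)-w^0_\tau$ vanishes on $\partial\Omega\setminus S'$, and must check that this correction neither destroys the leading singularity inside $\Omega$ nor inflates $\|g^1_\tau\|_{H^{1/2}}$ beyond $\tau^{1-n/2}$. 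Your proposal acknowledges the partial-data restriction but gives no mechanism for it, and this localization is a genuinely necessary ingredient rather than a routine detail.
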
 

For our second result we assume that condition (ii) is fulfilled. We consider also $S'$ an open subset of $\partial\Omega$, to be defined later (see Section 3.1), whose closure is contained into $S$ and we fix $\chi\in\mathcal C^{2+\alpha}(\partial\Omega)$ such that supp$(\chi)\subset S$ and $\chi=1$ on $S'$. Following \cite[Theorem 8.3, pp. 301]{LU} and \cite[Theorem 9.3, 9.4]{GT}, we can show that for $g=\lambda\chi+h$, with $h\in \mathcal C^{2+\alpha}(\partial\Omega)$ and $\lambda\in\R$ a constant, problem \eqref{eq1} admits a unique solution $u_g\in \mathcal C^{2+\alpha}(\overline{\Omega})$.  In addition, in a similar way as above, applying Lemma \ref{l2}, we can prove that the map
\bel{mesure1}\R\ni\lambda\mapsto\sup_{h\in\mathcal H_S}\limsup_{\epsilon\to0}\frac{\norm{\mathcal N_{\gamma,F}(\lambda\chi+\epsilon h)-\mathcal N_{\gamma,F}(\lambda\chi)}_{H^{-\frac{1}{2}}(S)}}{\epsilon}\ee 
is lying in $\mathcal C(\R)$. We will use this last map for our second result where we prove the stable determination of the semilinear term $G$ from the knowledge of $\mathcal N_{\gamma,F}$ given by \eqref{mesure1}.

\begin{theorem}\label{t2}  We assume that $n\geq3$ and that
\bel{t2b}a_{i,j}(x)=\delta_{ij},\quad i,j=1,\ldots,n,\ x\in \Omega,\ee
with $\delta$ the Kronecker delta symbol. For $j=1,2$, we fix $G_j\in \mathcal C^2(\R;\R)$ a non-decreasing function and we consider $F_j$ defined by \eqref{non2} with $G=G_j$.  We assume that  there exists a non-decreasing function $\kappa\in \mathcal C(\R_+)$ such that
\bel{t2a}G_1(0)=G_2(0),\quad \sum_{i,j=1}^2|G_j^{(i)}(\lambda)|\leq \kappa(|\lambda|),\quad \lambda\in\R.\ee
Then, fixing $\mathcal N=\mathcal N_{1,F_1}-\mathcal N_{1,F_2}$ and $R>0$,  we can find a constant $C_R>0$, depending on  $\Omega$, $\chi$, $\kappa$, $R$,  such that the following estimate 
\bel{t2c}\norm{G_1-G_2}_{L^\infty(-R,R)}\leq C_R\left(\sup_{\lambda\in[-R,R]}\sup_{h\in\mathcal H_S}\limsup_{\epsilon\to0}\frac{\norm{\mathcal N(\lambda\chi+\epsilon h)-\mathcal N(\lambda \chi)}_{H^{-\frac{1}{2}}(S)}}{\epsilon}\right)^{\frac{1}{3}}\ee
holds true.\end{theorem}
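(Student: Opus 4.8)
\emph{Proof strategy.} The plan is to linearize the partial DN map $\mathcal N_{1,F}$ at the background $\lambda\chi$ and to reduce the problem to a H\"older stable recovery, at a boundary point $x_0\in S'$, of the linearized zeroth order coefficient $G'(u)$ evaluated along the background solution; since that solution equals $\lambda$ on $S'$, this coefficient equals $G'(\lambda)$ at $x_0$, so letting $\lambda$ run over $[-R,R]$ and integrating will yield the estimate. First, for $j=1,2$ let $w_j\in\mathcal C^{2+\alpha}(\overline\Omega)$ be the solution of $-\Delta w_j+G_j(w_j)=0$ in $\Omega$, $w_j=\lambda\chi$ on $\partial\Omega$. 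By the a priori bounds underlying the well-posedness recalled before the statement together with \eqref{t2a}, one has $\norm{w_j}_{\mathcal C^{2+\alpha}(\overline\Omega)}\le C_R$ for $\abs\lambda\le R$, so that $q_j:=G_j'(w_j)$ satisfies $q_j\ge0$ (as $G_j$ is non-decreasing) and $\norm{q_j}_{\mathcal C^1(\overline\Omega)}\le C_R$, with $C_R$ depending only on $\Omega,\chi,\kappa,R$. By Lemma \ref{l2}, $\eps\mapsto u_{\lambda\chi+\eps h}$ is differentiable at $\eps=0$ with derivative $v_j$ solving $-\Delta v_j+q_jv_j=0$ in $\Omega$, $v_j=h$ on $\partial\Omega$, so the linearization of $\mathcal N_{1,F_j}$ at $\lambda\chi$ is $h\mapsto\partial_\nu v_j|_S$; writing $\mathcal M(\lambda)$ for $\sup_{h\in\mathcal H_S}\limsup_{\eps\to0}\eps^{-1}\norm{\mathcal N(\lambda\chi+\eps h)-\mathcal N(\lambda\chi)}_{H^{-1/2}(S)}$, this gives $\mathcal M(\lambda)=\sup_{h\in\mathcal H_S}\norm{\partial_\nu(v_1-v_2)|_S}_{H^{-1/2}(S)}$.

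Next, for $h\in\mathcal H_S$ set $z:=v_1-v_2$; then $z|_{\partial\Omega}=0$ and $-\Delta z+q_1z=-(q_1-q_2)v_2$ in $\Omega$, so for any $\phi$ solving $-\Delta\phi+q_1\phi=0$ in $\Omega$ with $\supp(\phi|_{\partial\Omega})\subset S$, Green's formula gives
\[
\int_\Omega(q_1-q_2)\,v_2\,\phi\,dx=\int_S\phi\,\partial_\nu z\,d\sigma ,
\]
whence $\bigl\lvert\int_\Omega(q_1-q_2)v_2\phi\,dx\bigr\rvert\le\norm{\phi|_{\partial\Omega}}_{H^{1/2}(S)}\,\mathcal M(\lambda)$, using that $\partial_\nu z$ is linear in $h$ and $\norm h_{H^{1/2}(\partial\Omega)}\le1$. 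I would then fix $x_0\in S'$ and, using $\overline{S'}\subset S$ and $n\ge3$, insert into this identity a family of singular solutions concentrating at $x_0$, in the spirit of the classical boundary determination construction: for a parameter $\tau\to\infty$, exact solutions $\phi_\tau$ of $-\Delta\phi+q_1\phi=0$ and $v_{2,\tau}$ of $-\Delta v+q_2v=0$, each with Dirichlet trace supported in a fixed small boundary neighbourhood of $x_0$ contained in $S$ and with $v_{2,\tau}|_{\partial\Omega}$ normalized to lie in $\mathcal H_S$, such that, after this normalization, $\bigl(\int_\Omega v_{2,\tau}\phi_\tau\,dx\bigr)^{-1}\!\int_\Omega f\,v_{2,\tau}\phi_\tau\,dx\to f(x_0)$ for every $f\in\mathcal C(\overline\Omega)$, the remainders in the construction being of lower order. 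Since $x_0\in S'$ forces $w_j(x_0)=\lambda\chi(x_0)=\lambda$, we have $(q_1-q_2)(x_0)=G_1'(\lambda)-G_2'(\lambda)$; taking $f=q_1-q_2$ and tracking the powers of $\tau$ coming from the modulus of continuity of $q_1-q_2$ near $x_0$ (controlled by $C_R$), from the remainders, from the normalization, and from $\norm{\phi_\tau|_{\partial\Omega}}_{H^{1/2}(S)}$, one is led to an estimate
\[
\abs{G_1'(\lambda)-G_2'(\lambda)}\le C_R\bigl(\tau^{-\beta}+\tau^{\mu}\mathcal M(\lambda)\bigr),\qquad\tau\ge\tau_0 ,
\]
with $\beta,\mu>0$ depending only on $\Omega,S,x_0$.

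Finally, optimizing in $\tau$ — and invoking the a priori bound $\abs{G_1'(\lambda)-G_2'(\lambda)}\le2\kappa(R)$ when $\mathcal M(\lambda)$ is not small — gives $\abs{G_1'(\lambda)-G_2'(\lambda)}\le C_R\,\mathcal M(\lambda)^{\beta/(\beta+\mu)}$, the construction being calibrated so that $\beta/(\beta+\mu)=\tfrac13$. Since $G_1(0)=G_2(0)$ by \eqref{t2a}, for $\abs\lambda\le R$ one has $(G_1-G_2)(\lambda)=\int_0^\lambda(G_1'-G_2')(s)\,ds$, so that
\[
\norm{G_1-G_2}_{L^\infty(-R,R)}\le R\sup_{\abs\lambda\le R}\abs{G_1'(\lambda)-G_2'(\lambda)}\le C_R\Bigl(\sup_{\abs\lambda\le R}\mathcal M(\lambda)\Bigr)^{1/3},
\]
which is \eqref{t2c}.

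The hard part will be the singular solution step: realizing the concentrating profiles as \emph{exact} solutions of the linearized operators $-\Delta+q_j$ with Dirichlet data supported in $S$, with concentration and remainder estimates that are quantitative and uniform over the admissible family of potentials $q_j$ — this is where $n\ge3$ and the growth control $\kappa$ enter — and then carrying out the bookkeeping of powers of $\tau$ that yields the exponent $\tfrac13$. By contrast, the linearization is routine once Lemma \ref{l2} and the quoted well-posedness are available, and the integral identity together with the final integration is elementary.
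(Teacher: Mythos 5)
Your strategy is essentially the paper's proof: linearize at $\lambda\chi$ via Lemma \ref{l2}, derive the Green-type integral identity for $q_{1}-q_{2}=G_1'(v_{1,\lambda})-G_2'(v_{2,\lambda})$, plug in singular solutions concentrating at $x_0\in S'$ (where the background solutions equal $\lambda$, so $(q_1-q_2)(x_0)=G_1'(\lambda)-G_2'(\lambda)$), optimize in the concentration parameter, and integrate using $G_1(0)=G_2(0)$. The step you defer is carried out in the paper's Section 3.2 with the explicit profiles $\partial_{x_k}H(\cdot,y_\tau)$ ($y_\tau$ a pole at distance $\tau\to 0$ outside $\overline\Omega$ near $x_0$, corrected through the enlarged domain $\Omega'$ so the Dirichlet trace is supported in $S$), which gives main term $\tau^{2-n}$, Taylor-remainder $\tau^{3-n}$ from the $W^{1,\infty}$ bound on $q_1-q_2$, and data norm $\tau^{-n/2}$, i.e.\ your $\beta=1$, $\mu=2$ and hence the exponent $\tfrac13$.
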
 

Let us observe that the estimate \eqref{t1c} in  Theorem \ref{t1} is a Lipschitz stability estimate in the determination of the quasilinear term $\gamma(u)$ appearing in \eqref{eq1} from some knowledge of the associated partial DN map $\mathcal N_{\gamma,F}$. The result of Theorem \ref{t1} can be seen as the determination of the nonlinear diffusion term $\gamma(u)$ when the nonlinear drift vector $D(x,u)$ is unknown for nonlinear stationary Fokker–Planck equations. Not only the stability estimate \eqref{t1c} is established independently of the choice of the convection term $D_j(x,u)$, $j=1,2$, but the constant of this stability estimate is completely independent of the nonlinear terms $\gamma_j$. This means that the result of Theorem \ref{t1} is not a conditional stability estimate requiring any \textit{a priori} estimate of the unknown parameter. In addition, the result of Theorem \ref{t1} is stated with variable second order coefficients and the measurements are restricted to any arbitrary open subset $S$ of the boundary $\partial\Omega$. 
To the best of our knowledge, we obtain in  Theorem \ref{t1} the first result of Lipschitz stability in the determination of a nonlinear term stated in such a general context and with measurements restricted to an arbitrary open subset of the boundary.

In contrast to Theorem \ref{t1}, the result of Theorem \ref{t2} is a H\"older stability estimate for the determination of a semilinear term $G$ when condition (ii) is fulfilled. This results is stated in a more restricted context than Theorem \ref{t1} and in the final stability estimate \eqref{t2c} the constant depend  on an \textit{a priori} estimate of the nonlinear term $G_j$, $j=1,2$, given by condition \eqref{t2a}. However, in contrast to Theorem \ref{t1}, in Theorem \ref{t2} we restrict both the support of the Dirichlet data $g$ appearing in \eqref{eq1} and the measurements to an arbitrary subset $S$. 

Let us mention that the proof Theorem \ref{t1} and \ref{t2} are based on a suitable application of linearization technique combined with applications of singular solutions suitably designed for our problem. This approach, allows us to obtain Lipschitz and H\"older stability estimate for this inverse problem while, as far as we know, all other results in that category were restricted to logarithmic stability estimate. We recall that this improvement of the stability estimate and the fact that in \eqref{t1c} the constant is independent of the size of the unknown parameter can be exploited for numerical reconstruction of these parameters by mean of Tikhonov's regularization approach (see e.g. \cite{CY}). In some near future, we plan to exploit the material  of this article and some further results for proving the numerical reconstruction of nonlinear terms.

Let us remark that, since the goal of our inverse problems is to determine unbounded functions defined on $\R$, our stability estimates need to be localized. This is why the stability estimates \eqref{t1c} and  \eqref{t2c} are stated in terms of estimates of the nonlinear terms on an interval of the form $(-R,R)$ for $R>0$ arbitrary chosen. While in \eqref{t1c} the constant is independent of $R$, the constant of the stability estimate \eqref{t2c} depends on $R$. We mention also that our stability estimates are stated with some knowledge of the DN map that can be seen as the limit value of the ratio of the differences of the map $\mathcal N_{\gamma,F}$. This result can be formulated in a similar way from the first order Fr\'echet derivative of $\mathcal N_{\gamma,F}$ considered in several articles treating the same issue (see e.g. \cite{CHY,CK}). In contrast to this last formulation, the stability estimates \eqref{t1c} and  \eqref{t2c} are stated with measurements   given by some explicit knowledge of  $\mathcal N_{\gamma,F}$  applied to some class of Dirichlet data.

This article is organized as follows. In Section 2, we recall some properties of \eqref{eq1} including the well-posdness of this boundary value problem and the linearization of our inverse problems. In Section 3, we introduce some class of singular solutions associated to our linearized problem and some of their properties. Finally,  Section 4 will be devoted to the completion of the proof of Theorem \ref{t1} and \ref{t2}.
\section{Preliminary properties}
In this section we consider several properties including the proof of the well-posedness of \eqref{eq1} in the context of Theorem \ref{t1} and \ref{t2}. We will also introduce the linearization of the problem \eqref{eq1} when condition (i) or condition (ii) is fulfilled.
We start by proving the well-posedness of \eqref{eq1} when the Dirichlet data $g$ takes the form $\lambda+h$ with $\lambda\in\R$ a constant and $h\in\mathcal C^{2+\alpha}(\partial\Omega)$  a sufficiently small Dirichlet data. This result can be stated as follows.

\begin{prop}\label{p1} Let condition (i) be fulfilled and let $g=\lambda+h$ with $\lambda\in\R$ a constant and $h\in\mathcal C^{2+\alpha}(\partial\Omega)$. Then, for all $\lambda\in\R$, there exists $\epsilon_\lambda>0$ depending on  $a$, $\gamma$, $D$, $\lambda$, $\Omega$,  such that, for $h\in  B_{\epsilon_\lambda}:=\{f\in\mathcal C^{2+\alpha}(\partial\Omega):\ \norm{f}_{\mathcal C^{2+\alpha}(\partial\Omega)}<\epsilon_\lambda\}$, problem \eqref{eq1} admits a unique solution $u_\lambda\in\mathcal C^{2+\alpha}(\overline{\Omega})$ satisfying
\bel{TA1a}\norm{u_\lambda-\lambda}_{\mathcal C^{2+\alpha}(\overline{\Omega})}\leq C\norm{h}_{\mathcal C^{2+\alpha}(\partial\Omega)}.\ee
\end{prop}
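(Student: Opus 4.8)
The plan is to reduce the problem to a fixed-point argument for the perturbation $v := u - \lambda$. Substituting $u = \lambda + v$ into \eqref{eq1} with condition (i), and using that $\gamma(\lambda)>0$, one obtains a quasilinear equation for $v$ with zero-th order structure coming from $D(x,\lambda+v)\cdot\nabla v$ and boundary data $v = h$ on $\partial\Omega$. The key structural feature to exploit is the sign condition on $\nabla_x\cdot D$, which is \emph{not} assumed in Proposition~\ref{p1} itself (it is only needed later for Theorem~\ref{t1}); here I only need local solvability, so I would instead freeze coefficients and treat the first-order term perturbatively. Concretely, I would set up the linear operator $L_\lambda w := -\sum_{i,j}\partial_{x_i}(a_{i,j}(x)\gamma(\lambda)\partial_{x_j}w)$, which is uniformly elliptic with $\mathcal C^{1+\alpha}$ (in fact $\mathcal C^2$) coefficients, hence by Schauder theory the Dirichlet problem $L_\lambda w = f$ in $\Omega$, $w = h$ on $\partial\Omega$, has a unique solution $w \in \mathcal C^{2+\alpha}(\overline\Omega)$ with $\norm{w}_{\mathcal C^{2+\alpha}(\overline\Omega)} \le C(\norm{f}_{\mathcal C^{\alpha}(\overline\Omega)} + \norm{h}_{\mathcal C^{2+\alpha}(\partial\Omega)})$.

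The iteration map $\Phi$ would send $v$ to the solution $w$ of $L_\lambda w = \mathcal R(x,v,\nabla v)$ in $\Omega$, $w = h$ on $\partial\Omega$, where $\mathcal R$ collects all the remaining terms: the part of the principal term measuring the deviation of $\gamma(\lambda+v)$ from $\gamma(\lambda)$, namely $\sum_{i,j}\partial_{x_i}(a_{i,j}(x)(\gamma(\lambda+v)-\gamma(\lambda))\partial_{x_j}v)$, minus $D(x,\lambda+v)\cdot\nabla v$, minus the constant term $F$ evaluated appropriately (note $F(x,\lambda,0) = D(x,\lambda)\cdot 0 = 0$ in case (i), so there is no inhomogeneous constant forcing, which is what makes $v=0$ the solution for $h=0$). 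One then checks that $\Phi$ maps the ball $\{v \in \mathcal C^{2+\alpha}(\overline\Omega): \norm{v}_{\mathcal C^{2+\alpha}(\overline\Omega)} \le \rho\}$ into itself and is a contraction, for $\rho$ and $\epsilon_\lambda$ chosen small (depending on $a$, $\gamma$, $D$, $\lambda$, $\Omega$): the right-hand side $\mathcal R(x,v,\nabla v)$ is quadratically small in $\norm{v}_{\mathcal C^{2+\alpha}}$ because each term contains at least one factor that is $O(\norm{v}_{\mathcal C^{1+\alpha}})$ (using $\gamma \in \mathcal C^3$, $D \in \mathcal C^2$ and Taylor expansion, together with the algebra property of Hölder spaces). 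The $\mathcal C^{\alpha}$-norm of $\mathcal R$ is thus bounded by $C(\rho^2 + \rho\,\epsilon_\lambda + \dots)$, and similarly for differences $\mathcal R(\cdot,v_1,\nabla v_1) - \mathcal R(\cdot,v_2,\nabla v_2)$ one gets a bound $C\rho\,\norm{v_1-v_2}_{\mathcal C^{2+\alpha}}$; combined with the Schauder estimate for $L_\lambda^{-1}$ and the contribution $C\norm{h}_{\mathcal C^{2+\alpha}(\partial\Omega)} \le C\epsilon_\lambda$, a standard smallness choice closes the argument and the Banach fixed-point theorem yields a unique $v = u_\lambda - \lambda$ in the ball, with \eqref{TA1a} following from $\norm{v}_{\mathcal C^{2+\alpha}} = \norm{\Phi(v)}_{\mathcal C^{2+\alpha}} \le C\epsilon_\lambda + C\rho\norm{v}_{\mathcal C^{2+\alpha}}$, i.e. $\norm{v}_{\mathcal C^{2+\alpha}} \le 2C\norm{h}_{\mathcal C^{2+\alpha}(\partial\Omega)}$ after absorbing.

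The main obstacle I anticipate is bookkeeping rather than conceptual: one must be careful that the principal-part perturbation term $\sum_{i,j}\partial_{x_i}(a_{i,j}(\gamma(\lambda+v)-\gamma(\lambda))\partial_{x_j}v)$, after expanding the outer derivative, contains a term $a_{i,j}(\gamma(\lambda+v)-\gamma(\lambda))\partial_{x_i}\partial_{x_j}v$ which involves second derivatives of $v$ — so naively it is not "lower order". The point is that its $\mathcal C^{\alpha}$-norm is still controlled by $\norm{\gamma(\lambda+v)-\gamma(\lambda)}_{\mathcal C^{\alpha}}\norm{v}_{\mathcal C^{2+\alpha}} \le C(\rho)\rho\,\norm{v}_{\mathcal C^{2+\alpha}}$ with $C(\rho)\rho \to 0$, so it is absorbed into the contraction constant; this is exactly why a fixed-point scheme based on the frozen operator $L_\lambda$ works whereas one cannot simply invoke linear theory for the full operator. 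One should also record uniqueness in the full class $\mathcal C^{2+\alpha}(\overline\Omega)$ (not just in the small ball) if desired — but for the purposes of this paper uniqueness among solutions close to the constant $\lambda$ suffices, and that is exactly what the fixed-point argument gives. Alternatively, the whole proposition can be deduced from the implicit function theorem applied to the map $(\,h, v\,) \mapsto$ (the PDE residual, $v|_{\partial\Omega} - h$) between appropriate Hölder spaces, whose partial Fréchet derivative in $v$ at $(0,0)$ is precisely the isomorphism $L_\lambda$ with Dirichlet boundary condition; I would present whichever is shorter, but the two are essentially the same computation.
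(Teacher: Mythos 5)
Your overall strategy -- perturbing around the constant solution $\lambda$, freezing coefficients at $v=0$, inverting the frozen operator by Schauder theory, and closing with a fixed point or implicit function argument -- is the same as the paper's (the paper uses the implicit function theorem version, with $\mathcal G(f,v)=\bigl(\mbox{PDE residual},\,v|_{\partial\Omega}-f\bigr)$ and \cite[Theorem 6.8]{GT} to invert $\partial_v\mathcal G(0,0)$). However, there is a genuine gap in your choice of frozen operator. You take $L_\lambda w=-\sum_{i,j}\partial_{x_i}(a_{i,j}\gamma(\lambda)\partial_{x_j}w)$ and push the \emph{entire} convection term $D(x,\lambda+v)\cdot\nabla v$ into the remainder $\mathcal R$, claiming $\mathcal R$ is quadratically small because ``each term contains at least one factor that is $O(\norm{v}_{\mathcal C^{1+\alpha}})$.'' One small factor gives only \emph{linear} smallness, and the term $D(x,\lambda)\cdot\nabla v$ is exactly that: linear in $v$ with the $O(1)$ coefficient $D(\cdot,\lambda)$. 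Consequently the self-mapping estimate reads $\norm{\Phi(v)}_{\mathcal C^{2+\alpha}}\leq C_0\norm{D(\cdot,\lambda)}\rho+C(\rho^2+\epsilon_\lambda)$ and the difference estimate contains $D(x,\lambda)\cdot\nabla(v_1-v_2)$, contributing $C_0\norm{D(\cdot,\lambda)}\norm{v_1-v_2}_{\mathcal C^{2+\alpha}}$ with no factor of $\rho$; unless $C_0\norm{D(\cdot,\lambda)}<1$ (which is not assumed), $\Phi$ is neither a self-map of the small ball nor a contraction. Your claimed bound $C\rho\norm{v_1-v_2}_{\mathcal C^{2+\alpha}}$ for the difference of remainders is therefore false as stated.

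The repair is exactly what the paper does: include the first-order term in the frozen operator, i.e.\ take $L_\lambda w=-\gamma(\lambda)\sum_{i,j}\partial_{x_i}(a_{i,j}\partial_{x_j}w)+D(x,\lambda)\cdot\nabla w$, which is the correct Fr\'echet derivative of the residual at $v=0$ (the derivative of $v\mapsto D(x,\lambda+v)\cdot\nabla v$ at $0$ is $w\mapsto D(x,\lambda)\cdot\nabla w$, since $\nabla v|_{v=0}=0$ kills the $\partial_tD$ term). This operator has no zeroth-order term, so \cite[Theorem 6.8]{GT} still gives a unique $\mathcal C^{2+\alpha}$ solution with the Schauder bound, and after this modification the remainder genuinely carries two small factors and your scheme (or, equivalently, your implicit-function-theorem alternative, whose linearization you also mis-identify as the pure second-order $L_\lambda$) closes. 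Your observation about the principal-part perturbation $a_{i,j}(\gamma(\lambda+v)-\gamma(\lambda))\partial_{x_i}\partial_{x_j}v$ being controllable despite involving second derivatives is correct and is indeed the relevant subtlety there; the convection term is the one you mishandled.
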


\begin{proof} We prove this result by extending the approach of \cite[Theorem B.1.]{CFKKU}, where a similar boundary value problem has been studied with infinity smooth parameters and  without the convection term $D$, to problem \eqref{eq1}. Let us first observe that constant functions are solutions of \eqref{eq1} with $g=\lambda$. Thus,  by splitting $u_\lambda$ into two terms $u_\lambda=\lambda+u_\lambda^0$, we deduce that $u_\lambda^0$ solves
\bel{eq4}
\left\{
\begin{array}{ll}
-\sum_{i,j=1}^d \partial_{x_i} 
\left( a_{i,j}(x)\gamma(\lambda+u_\lambda^0) \partial_{x_j} u_\lambda^0(x) \right)+D(x,u_\lambda^0(x)+\lambda)\cdot\nabla u_\lambda^0=0  & \mbox{in}\ \Omega ,
\\
u_\lambda^0=h &\mbox{on}\ \partial\Omega,
\end{array}
\right.
\ee
Therefore, we only need to prove that there exists $\epsilon_\lambda>0$ depending on  $a$, $\gamma$, $D$, $\lambda$, $\Omega$, such that, for $h\in  B_{\epsilon_\lambda}$, problem \eqref{eq4} admits a unique solution $u_\lambda^0\in\mathcal C^{2+\alpha}(\overline{\Omega})$ satisfying
\bel{TA1v}\norm{u_\lambda^0}_{\mathcal C^{2+\alpha}(\overline{\Omega})}\leq C\norm{h}_{\mathcal C^{2+\alpha}(\partial\Omega)}.\ee
For this purpose, we introduce the map $\mathcal G$ from $\mathcal C^{2+\alpha}(\partial\Omega)\times\mathcal C^{2+\alpha}(\overline{\Omega})$ to the space $C^{\alpha}(\overline{\Omega})\times\mathcal C^{2+\alpha}(\partial\Omega)$ defined by
$$\mathcal G: (f,v)\mapsto\left(-\sum_{i,j=1}^d \partial_{x_i} 
\left( a_{i,j}(x)\gamma(\lambda+v) \partial_{x_j} v \right)+D(x,v+\lambda)\cdot\nabla v, v_{|\partial\Omega}-f\right).$$
 Using the fact that $\gamma\in \mathcal C^{3}(\R)$ and applying \cite[Theorem  A.8]{H}, we deduce that for any $v\in \mathcal C^{2+\alpha}(\overline{\Omega})$ we have $x\mapsto\gamma'(\lambda+v(x))\in \mathcal C^{1+\alpha}(\overline{\Omega})$. Then, applying \cite[Theorem  A.7]{H}, we deduce that 
$$x\mapsto-\sum_{i,j=1}^d \partial_{x_i} 
\left( a_{i,j}(x)\gamma'(\lambda+v(x)) \partial_{x_j} v(x)\right)\in \mathcal C^{\alpha}(\overline{\Omega})$$
and the map
$$\mathcal C^{2+\alpha}(\overline{\Omega})\ni v\mapsto -\sum_{i,j=1}^d \partial_{x_i} \left( a_{i,j}(x)\gamma(\lambda+v) \partial_{x_j} v\right)\in \mathcal C^{\alpha}(\overline{\Omega})$$
is $\mathcal C^1$.
In the same way, using the fact that $D\in \mathcal C^2(\overline{\Omega}\times\R;\R^n)$, we find
$$x\mapsto[\partial_t D(x,t)|_{t=v(x)+\lambda}]\cdot\nabla v(x)\in \mathcal C^{\alpha}(\overline{\Omega})$$
and we deduce that the map
$$\mathcal C^{2+\alpha}(\overline{\Omega})\ni v\mapsto D(x,v+\lambda)\cdot\nabla v\in \mathcal C^{\alpha}(\overline{\Omega})$$
is $\mathcal C^1$. It follows  that the map $\mathcal G$ is  $\mathcal C^1$ from $\mathcal C^{2+\alpha}(\partial\Omega)\times\mathcal C^{2+\alpha}(\overline{\Omega})$ to the space $C^{\alpha}(\overline{\Omega})\times\mathcal C^{2+\alpha}(\partial\Omega)$. Moreover, we have $\mathcal G(0,0)=(0,0)$ and
$$\partial_v\mathcal G(0,0)w=\left(-\gamma(\lambda)\sum_{i,j=1}^d \partial_{x_i} 
\left( a_{i,j}(x) \partial_{x_j} w\right)+D(x,\lambda)\cdot\nabla w, w_{|\partial\Omega}\right).$$
In view of \cite[Theorem 6.8]{GT} and the fact that $\gamma(\lambda)>0$, for any $(F,f)\in \mathcal C^{\alpha}(\overline{\Omega})\times\mathcal C^{2+\alpha}(\partial\Omega)$, the following linear boundary value problem
$$
\left\{
\begin{array}{ll}
-\gamma(\lambda)\sum_{i,j=1}^d \partial_{x_i} 
\left( a_{i,j}(x) \partial_{x_j} w \right)+D(x,\lambda)\cdot\nabla w=F  & \mbox{in}\ \Omega ,
\\
w=f &\mbox{on}\ \partial\Omega.
\end{array}
\right.$$
admits a unique solution $w\in \mathcal C^{2+\alpha}(\overline{\Omega})$ satisfying
$$\norm{w}_{\mathcal C^{2+\alpha}(\overline{\Omega})}\leq C(\norm{F}_{\mathcal C^{\alpha}(\overline{\Omega})}+\norm{f}_{\mathcal C^{2+\alpha}(\partial\Omega)}),$$
with $C>0$ depending only on $a$, $\lambda$, $\Omega$ and $D$. Thus, $\partial_v\mathcal G(0,0)$ is an isomorphism from $\mathcal C^{2+\alpha}(\overline{\Omega})$ to $\mathcal C^{\alpha}(\overline{\Omega})\times\mathcal C^{2+\alpha}(\partial\Omega)$ and, applying the implicit function theorem, we deduce that there exists $\epsilon_\lambda>0$ depending on  $a$, $\lambda$, $\Omega$ and $D$, and a $\mathcal C^1$ map $\psi$ from $ B_{\epsilon_\lambda}$ to $\mathcal C^{2+\alpha}(\overline{\Omega})$, such that, for all $f\in  B_{\epsilon_\lambda}$, we have 
$\mathcal G(f,\psi(f))=(0,0)$.
This proves that, for all $f\in  B_{\epsilon_\lambda}$, $v=\psi(f)$ is a solution of \eqref{eq4}. Recalling that a solution of the problem   \eqref{eq4} can also be seen as a solution of the linear problem with sufficiently smooth coefficients depending on $u^0_\lambda$, we can apply again \cite[Theorem 6.8]{GT} in order to deduce that $u^0_\lambda=\psi(f)$ is the unique solution of \eqref{eq4}. Combining this with the fact that $\psi$ is $\mathcal C^1$ from $B_{\epsilon_\lambda}$ to $\mathcal C^{2+\alpha}(\overline{\Omega})$ and $\psi(0)=0$, we obtain \eqref{TA1v}. This completes the proof of the proposition.\end{proof}
In view of Proposition \ref{p1}, for all constant $\lambda\in\R$, we can define $\mathcal N_{\gamma,F}$ on  the set $\lambda+B_{\epsilon_\lambda}=\{\lambda+h:\ h\in B_{\epsilon_\lambda}\}$.

Now let us fix $\lambda\in\R$, $h\in\mathcal C^{2+\alpha}(\partial\Omega)$ and  $s_h=\frac{\epsilon_\lambda}{2\norm{h}_{\mathcal C^{2+\alpha}(\partial\Omega)}+1}$. For $s\in[-s_h,s_h]$, we consider the following boundary value problem 
\bel{eq5}
\left\{
\begin{array}{ll}
-\sum_{i,j=1}^d \partial_{x_i} 
\left( a_{i,j}(x)\gamma(u_{\lambda,s}) \partial_{x_j} u_{\lambda,s} \right)+D(x,u_{\lambda,s})\cdot\nabla u_{\lambda,s}=0  & \mbox{in}\ \Omega ,
\\
u_{\lambda,s}=\lambda+sh &\mbox{on}\ \partial\Omega.
\end{array}
\right.
\ee
We consider also the solution of the following linear problem
\bel{eq6}
\left\{
\begin{array}{ll}
-\gamma(\lambda)\sum_{i,j=1}^d \partial_{x_i} 
\left( a_{i,j}(x) \partial_{x_j} w_\lambda \right)+D(x,\lambda)\cdot\nabla w_\lambda=0  & \mbox{in}\ \Omega ,
\\
w_\lambda=h &\mbox{on}\ \partial\Omega.
\end{array}
\right.
\ee

We prove the following result about the linearization of the problem \eqref{eq5}.
\begin{lem}\label{l1} For all $\lambda\in\R$ and $h\in\mathcal C^{2+\alpha}(\partial\Omega)$, the map $[-s_h,s_h]\ni s\longmapsto u_{s,\lambda}$ is lying in $\mathcal C^1([-s_h,s_h];\mathcal C^{2+\alpha}(\overline{\Omega}))$ and we have $\partial_su_{s,\lambda}|_{s=0}=w_\lambda$ with $w_\lambda$ the unique solution of \eqref{eq6}.\end{lem}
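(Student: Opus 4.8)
The plan is to read the conclusion off the implicit function theorem construction already carried out in the proof of Proposition \ref{p1}. Recall that there, for the fixed value $\lambda$, we produced a number $\epsilon_\lambda>0$ and a $\mathcal C^1$ map $\psi\colon B_{\epsilon_\lambda}\to\mathcal C^{2+\alpha}(\overline{\Omega})$ with $\psi(0)=0$ such that, for every $f\in B_{\epsilon_\lambda}$, the function $\lambda+\psi(f)$ is the unique $\mathcal C^{2+\alpha}(\overline{\Omega})$ solution of \eqref{eq1} with Dirichlet data $\lambda+f$. First I would observe that the choice $s_h=\epsilon_\lambda/(2\norm{h}_{\mathcal C^{2+\alpha}(\partial\Omega)}+1)$ guarantees $\norm{sh}_{\mathcal C^{2+\alpha}(\partial\Omega)}<\epsilon_\lambda$ for all $s\in[-s_h,s_h]$ (the case $h=0$ being trivial), so that $sh\in B_{\epsilon_\lambda}$, problem \eqref{eq5} is well posed on this interval, and by uniqueness $u_{\lambda,s}=\lambda+\psi(sh)$ there.

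Next, the map $[-s_h,s_h]\ni s\mapsto sh\in\mathcal C^{2+\alpha}(\partial\Omega)$ is linear and continuous, hence $\mathcal C^1$; composing it with the $\mathcal C^1$ map $\psi$ shows that $s\mapsto u_{\lambda,s}=\lambda+\psi(sh)$ is $\mathcal C^1$ from $[-s_h,s_h]$ to $\mathcal C^{2+\alpha}(\overline{\Omega})$, with $\partial_su_{\lambda,s}|_{s=0}=\psi'(0)h$, where $\psi'(0)$ denotes the Fr\'echet derivative of $\psi$ at $0$. It then remains to identify $\psi'(0)h$ with $w_\lambda$. For this I would differentiate the identity $\mathcal G(f,\psi(f))=(0,0)$ at $f=0$; using $\psi(0)=0$ and the chain rule, this gives $\partial_f\mathcal G(0,0)+\partial_v\mathcal G(0,0)\psi'(0)=0$. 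Since $\partial_f\mathcal G(0,0)k=(0,-k)$ and, as computed in Proposition \ref{p1},
$$\partial_v\mathcal G(0,0)w=\left(-\gamma(\lambda)\sum_{i,j=1}^d\partial_{x_i}\!\left(a_{i,j}(x)\partial_{x_j}w\right)+D(x,\lambda)\cdot\nabla w,\ w_{|\partial\Omega}\right),$$
it follows that $\tilde w:=\psi'(0)h$ solves $-\gamma(\lambda)\sum_{i,j=1}^d\partial_{x_i}(a_{i,j}\partial_{x_j}\tilde w)+D(x,\lambda)\cdot\nabla\tilde w=0$ in $\Omega$ with $\tilde w=h$ on $\partial\Omega$, i.e.\ exactly problem \eqref{eq6}. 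Invoking once more the unique solvability of that linear Dirichlet problem in $\mathcal C^{2+\alpha}(\overline{\Omega})$ furnished by \cite[Theorem 6.8]{GT} (applicable since $\gamma(\lambda)>0$, $a$ is elliptic, and the zeroth-order coefficient vanishes), we conclude $\tilde w=w_\lambda$, which completes the proof.

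I do not anticipate a serious obstacle: everything reduces to the smooth dependence already encoded in the implicit function theorem of Proposition \ref{p1} together with a uniqueness statement for a linear elliptic equation. The only points needing a little care are checking that $sh$ remains inside $B_{\epsilon_\lambda}$ on the whole interval $[-s_h,s_h]$, and the bookkeeping in differentiating $\mathcal G(f,\psi(f))=(0,0)$ so as to recognize the linearized equation \eqref{eq6}. An alternative, slightly longer route would be to differentiate \eqref{eq5} directly in $s$ at $s=0$, control the remainder in $\mathcal C^{2+\alpha}(\overline{\Omega})$ using \eqref{TA1a}, and then appeal to the same uniqueness result; but routing through $\psi$ is cleaner and avoids re-proving the differentiability by hand.
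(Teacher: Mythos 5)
Your proposal is correct and follows essentially the same route as the paper: both get the $\mathcal C^1$ dependence on $s$ from the implicit-function-theorem map $\psi$ of Proposition \ref{p1}, and both identify $\partial_s u_{\lambda,s}|_{s=0}$ as the solution of the linearized problem \eqref{eq6} via uniqueness for that linear Dirichlet problem. The only cosmetic difference is that the paper differentiates the PDE \eqref{eq5} directly in $s$ at $s=0$ (using $u_{\lambda,s}|_{s=0}=\lambda$), whereas you differentiate the identity $\mathcal G(f,\psi(f))=(0,0)$ in $f$ — the same linearization computation in different notation.
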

\begin{proof} Let us observe that applying the result of Proposition \ref{p1},  the unique solution of \eqref{eq1} with $g=\lambda+sh$, $s\in[-s_h,s_h]$, takes the form $\lambda+\psi(sh)$ with $\psi$ a $\mathcal C^1$ map  from $ B_{\epsilon_\lambda}$ to $\mathcal C^{2+\alpha}(\overline{\Omega})$. This clearly implies that $[-s_h,s_h]\ni s\longmapsto u_{s,\lambda}\in\mathcal C^1([-s_h,s_h];\mathcal C^{2+\alpha}(\overline{\Omega}))$. Moreover, using the fact that $\lambda$ is the unique solution of \eqref{eq1} with $g=\lambda$, we deduce that $ u_{s,\lambda}|_{s=0}=\lambda$. Therefore, we have 
$$\partial_s\left[-\sum_{i,j=1}^d \partial_{x_i} 
\left( a_{i,j}(x)\gamma(u_{\lambda,s}) \partial_{x_j} u_{\lambda,s} \right)\right]|_{s=0}=-\gamma(\lambda)\sum_{i,j=1}^d \partial_{x_i} 
\left( a_{i,j}(x) \partial_{x_j} \partial_su_{\lambda,s}|_{s=0} \right),$$
$$\partial_s[D(x,u_{\lambda,s})\cdot\nabla u_{\lambda,s}]|_{s=0}=D(x,\lambda)\cdot\nabla \partial_su_{\lambda,s}|_{s=0}$$
and it follows that 
$$\left\{
\begin{array}{ll}
-\gamma(\lambda)\sum_{i,j=1}^d \partial_{x_i} 
\left( a_{i,j}(x) \partial_{x_j} \partial_su_{\lambda,s}|_{s=0} \right)+D(x,\lambda)\cdot\nabla \partial_su_{\lambda,s}|_{s=0}=0  & \mbox{in}\ \Omega ,
\\
\partial_su_{\lambda,s}|_{s=0}=h &\mbox{on}\ \partial\Omega.
\end{array}
\right.$$
Then the uniqueness of the solution of this boundary value problem implies that $\partial_su_{\lambda,s}|_{s=0}=w_\lambda$.\end{proof}

Let us also define the partial DN map $\Lambda_{\gamma(\lambda),D(\cdot,\lambda)}:h\mapsto \gamma(\lambda)\partial_{\nu_a} w_\lambda|_{S}$ associated with problem \eqref{eq6}. For $h\in\mathcal C^{2+\alpha}(\partial\Omega)$,    $s\in[-s_h,s_h]$ we consider $u_{\lambda,s}$ the solution of \eqref{eq5} and $w_\lambda$ the solution of \eqref{eq6}.  Applying Lemma \ref{l1} and assuming that condition (i) is fulfilled, we get
$$\partial_s[\gamma(u_{\lambda,s})\partial_{\nu_a}u_{\lambda,s}]|_{s=0}=\partial_su_{\lambda,s}|_{s=0}\gamma'(u_{\lambda,s}|_{s=0})\partial_{\nu_a}u_{\lambda,s}|_{s=0}+\gamma(u_{\lambda,s}|_{s=0})\partial_{\nu_a}w_\lambda.$$
Recalling that $u_{\lambda,s}|_{s=0}=\lambda$, we get that $\partial_{\nu_a}u_{\lambda,s}|_{s=0}\equiv0$ and it follows that
$$\partial_s \mathcal N_{\gamma,F}(\lambda+sh)|_{s=0}=\gamma(\lambda)\partial_{\nu_a}w_\lambda|_{S}$$
which implies that
\bel{l1b} \partial_s \mathcal N_{\gamma,F}(\lambda+sh)|_{s=0}=\Lambda_{\gamma(\lambda),D(\cdot,\lambda)}h,\quad \lambda\in\R,\ h\in \mathcal C^{2+\alpha}(\partial\Omega).\ee
Using this identity, we deduce that \eqref{map} can be extended uniquely by density to a continuous linear map from $ H^{\frac{1}{2}}(\partial\Omega)$ to $H^{-\frac{1}{2}}(S)$. In order to prove that \eqref{mesure} is continuous, we need the following result.
\begin{lem}\label{l11}  The map $\R\ni\lambda\mapsto\Lambda_{\gamma(\lambda),D(\cdot,\lambda)}$ is lying in $\mathcal C(\R;\mathcal B(H^{\frac{1}{2}}(\partial\Omega),H^{-\frac{1}{2}}(S)))$.\end{lem}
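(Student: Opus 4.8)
The plan is to show continuity of $\lambda\mapsto\Lambda_{\gamma(\lambda),D(\cdot,\lambda)}$ by reducing it to the continuous dependence of the solution $w_\lambda$ of the linear problem \eqref{eq6} on the parameter $\lambda$, and then transferring this to the Dirichlet-to-Neumann map by a duality/weak-formulation argument. Fix $\lambda_0\in\R$ and let $\lambda\to\lambda_0$. For a fixed $h\in\mathcal C^{2+\alpha}(\partial\Omega)$, let $w_\lambda$ and $w_{\lambda_0}$ solve \eqref{eq6} with parameters $\lambda$ and $\lambda_0$ respectively, and set $v_\lambda=w_\lambda-w_{\lambda_0}$. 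Then $v_\lambda$ solves a linear Dirichlet problem with zero boundary data and right-hand side
$$
\gamma(\lambda_0)\sum_{i,j=1}^n\partial_{x_i}\bigl(a_{i,j}(x)\partial_{x_j}v_\lambda\bigr)-D(x,\lambda_0)\cdot\nabla v_\lambda=\bigl(\gamma(\lambda)-\gamma(\lambda_0)\bigr)\sum_{i,j=1}^n\partial_{x_i}\bigl(a_{i,j}(x)\partial_{x_j}w_\lambda\bigr)+\bigl(D(x,\lambda)-D(x,\lambda_0)\bigr)\cdot\nabla w_\lambda.
$$
Since $\gamma\in\mathcal C^3$ and $D\in\mathcal C^2$, the factors $|\gamma(\lambda)-\gamma(\lambda_0)|$ and $\|D(\cdot,\lambda)-D(\cdot,\lambda_0)\|_{\mathcal C(\overline\Omega)}$ tend to $0$; together with a uniform bound on $\|w_\lambda\|_{\mathcal C^{2+\alpha}(\overline\Omega)}$ for $\lambda$ near $\lambda_0$ (from \cite[Theorem 6.8]{GT} applied to \eqref{eq6}, with a constant depending continuously on $\lambda$), we obtain $\|v_\lambda\|_{\mathcal C^{2+\alpha}(\overline\Omega)}\to0$, hence in particular $\gamma(\lambda)\partial_{\nu_a}w_\lambda|_S\to\gamma(\lambda_0)\partial_{\nu_a}w_{\lambda_0}|_S$ in $\mathcal C^{1+\alpha}(\overline S)$. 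This gives pointwise-in-$h$ convergence; the issue is to upgrade it to operator-norm convergence in $\mathcal B(H^{1/2}(\partial\Omega),H^{-1/2}(S))$, since a priori this only bounds the Schwartz-kernel-type action on smooth $h$.

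To handle the operator norm I would use the weak formulation: for $h_1,h_2\in\mathcal C^{2+\alpha}(\partial\Omega)$, with $w_\lambda$ solving \eqref{eq6} for datum $h_1$ and $W_\lambda$ solving the formal adjoint problem with datum $h_2$, one has
$$
\langle\Lambda_{\gamma(\lambda),D(\cdot,\lambda)}h_1,\,h_2\rangle_{H^{-1/2}(S),H^{1/2}(S)}=\int_\Omega\Bigl(\gamma(\lambda)\sum_{i,j=1}^n a_{i,j}\partial_{x_j}w_\lambda\,\partial_{x_i}W_\lambda+(D(x,\lambda)\cdot\nabla w_\lambda)\,W_\lambda\Bigr)\,dx+\text{(boundary terms from }S'\text{)},
$$
and subtracting the analogous identity at $\lambda_0$ produces integrals in which each term carries a factor that is either $\gamma(\lambda)-\gamma(\lambda_0)$, $D(\cdot,\lambda)-D(\cdot,\lambda_0)$, $w_\lambda-w_{\lambda_0}$, or $W_\lambda-W_{\lambda_0}$. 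Using the $H^1(\Omega)$-stability estimates $\|w_\lambda\|_{H^1(\Omega)}\le C\|h_1\|_{H^{1/2}(\partial\Omega)}$, $\|W_\lambda\|_{H^1(\Omega)}\le C\|h_2\|_{H^{1/2}(\partial\Omega)}$ (with $C$ locally uniform in $\lambda$, from the ellipticity \eqref{ell} and the sign condition on $\nabla_x\cdot D$), together with the $H^1$-smallness $\|w_\lambda-w_{\lambda_0}\|_{H^1(\Omega)}\to0$, $\|W_\lambda-W_{\lambda_0}\|_{H^1(\Omega)}\to0$ — proved exactly as above but now in the energy space, which is the natural setting for a bilinear estimate — each difference term is bounded by $o(1)\,\|h_1\|_{H^{1/2}(\partial\Omega)}\|h_2\|_{H^{1/2}(\partial\Omega)}$. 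Taking the supremum over $\|h_1\|_{H^{1/2}(\partial\Omega)}\le1$, $\|h_2\|_{H^{1/2}(S)}\le1$ and using density of $\mathcal C^{2+\alpha}$ in the relevant spaces yields $\|\Lambda_{\gamma(\lambda),D(\cdot,\lambda)}-\Lambda_{\gamma(\lambda_0),D(\cdot,\lambda_0)}\|_{\mathcal B(H^{1/2}(\partial\Omega),H^{-1/2}(S))}\to0$.

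The main obstacle, and the step requiring the most care, is the locally uniform $H^1$-bound together with the $H^1$-convergence $w_\lambda\to w_{\lambda_0}$: this needs the coercivity of the bilinear form associated with \eqref{eq6} uniformly for $\lambda$ in a neighbourhood of $\lambda_0$, which follows from \eqref{ell} and an integration by parts using $\nabla_x\cdot D(x,\lambda)\le 0$ (i.e. the hypothesis \eqref{t11a}) to control the first-order term — one writes $\int_\Omega (D(x,\lambda)\cdot\nabla w)\,w\,dx=-\tfrac12\int_\Omega(\nabla_x\cdot D(x,\lambda))\,w^2\,dx+\tfrac12\int_{\partial\Omega}(D(x,\lambda)\cdot\nu)\,w^2\,dS$ and absorbs the boundary term. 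A minor additional point is that, in the setting relevant to Theorem \ref{t1}, the map acts on $H^{1/2}(\partial\Omega)$ and the target is $H^{-1/2}(S)$; the restriction to $S$ and the role of the cutoff $\chi$ only affect the boundary terms, which are handled by the trace theorem. Once coercivity is in hand, everything else is the Lax–Milgram estimate plus Cauchy–Schwarz, and the continuity of $\gamma$ and $D$ in $\lambda$ does the rest.
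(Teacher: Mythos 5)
Your proposal is correct, and its core mechanism is the same as the paper's: the difference $w_\lambda-w_{\lambda_0}$ solves the $\lambda_0$-problem with zero boundary data and a source carrying the factors $\gamma(\lambda)-\gamma(\lambda_0)$ and $D(\cdot,\lambda)-D(\cdot,\lambda_0)$, and the locally uniform a priori bound $\norm{w_{\lambda,h}}_{H^1(\Omega)}\leq C\norm{h}_{H^{\frac{1}{2}}(\partial\Omega)}$ turns that source into $o(1)\norm{h}_{H^{\frac{1}{2}}(\partial\Omega)}$, which is precisely what upgrades pointwise convergence to operator-norm convergence. Where you genuinely diverge is the last step, converting $H^1$-smallness of $w_\lambda-w_{\lambda_0}$ into $H^{-\frac{1}{2}}(S)$-smallness of the conormal derivative: you pair against a second family of adjoint solutions through the bilinear form, whereas the paper simply uses the bound $\norm{\partial_{\nu_a}v}_{H^{-\frac{1}{2}}(\partial\Omega)}\leq C(\norm{v}_{H^1(\Omega)}+\norm{\mathcal A v}_{L^2(\Omega)})$ applied to $v=w_{\lambda+\delta,h}-w_{\lambda,h}$; this forces the paper to also propagate an $L^2$ bound on $\mathcal A w_{\lambda,h}$ through \cite[Theorem 8.3]{GT}, which your bilinear-form route avoids by staying entirely in the energy space (at the cost of introducing the adjoint family — note that a fixed, $\lambda$-independent $H^1$ extension of $h_2$ supported in $S$ would suffice and would simplify your bookkeeping, since only $w_\lambda$ needs to vary). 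Two small corrections: the sign condition \eqref{t11a} is neither assumed nor needed for this lemma — unique solvability and the locally uniform constant for \eqref{eq6} follow from the maximum principle for operators with no zero-order term together with the Fredholm alternative, which is how the paper gets its estimate on $[\lambda-1,\lambda+1]$ — and the cutoff $\chi$ plays no role here (it belongs to the setting of Theorem \ref{t2}). Your opening $\mathcal C^{2+\alpha}$ argument is, as you yourself observe, superfluous once the energy-space argument is in place.
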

\begin{proof}
From now on and in all this article, we denote by $\mathcal A$ the differential operator defined by
\bel{A}\mathcal Au(x)=-\sum_{i,j=1}^d \partial_{x_i} 
\left( a_{i,j}(x) \partial_{x_j} u(x)\right),\quad u\in H^1(\Omega),\ x\in\Omega.\ee
For any $h\in H^{\frac{1}{2}}(\partial\Omega)$ and any $\lambda\in\R$, we denote by $w_{\lambda,h}$ the solution of \eqref{eq6}. Since $\gamma\in \mathcal C^3(\R;(0,+\infty))$, the proof of the lemma will be completed if we show that 
$$\lim_{\delta\to 0}\ \ \underset{\norm{h}_{H^{\frac{1}{2}}(\partial\Omega)}\leq 1}{\sup}\norm{\partial_{\nu_a}w_{\lambda+\delta,h}-\partial_{\nu_a}w_{\lambda,h}}_{H^{-\frac{1}{2}}(\partial\Omega)}=0,\quad \lambda\in\R.$$
Using the fact that there exists $C>0$ depending only on $a$ and $\Omega$ such that
$$\norm{\partial_{\nu_a}w_{\lambda+\delta,h}-\partial_{\nu_a}w_{\lambda,h}}_{H^{-\frac{1}{2}}(\partial\Omega)}\leq C\left[\norm{w_{\lambda+\delta,h}-w_{\lambda,h}}_{H^1(\Omega)}+\norm{\mathcal Aw_{\lambda+\delta,h}-\mathcal Aw_{\lambda,h}}_{L^2(\Omega)}\right],$$
we are left with the task of proving that
\bel{l11a}\lim_{\delta\to 0}\ \ \underset{\norm{h}_{H^{\frac{1}{2}}(\partial\Omega)}\leq 1}{\sup}[\norm{w_{\lambda+\delta,h}-w_{\lambda,h}}_{H^1(\Omega)}+\norm{\mathcal Aw_{\lambda+\delta,h}-\mathcal Aw_{\lambda,h}}_{L^2(\Omega)}]=0,\quad \lambda\in\R.\ee
For this purpose, we fix $\lambda\in\R$, $\delta\in(-1,1)$, $h\in H^{\frac{1}{2}}(\partial\Omega)$ and we consider $w=w_{\lambda,h}-w_{\lambda+\delta,h}$. It is clear that $w$ solves the boundary value problem
\bel{eq66}
\left\{
\begin{array}{ll}
-\gamma(\lambda)\sum_{i,j=1}^d \partial_{x_i} 
\left( a_{i,j}(x) \partial_{x_j} w \right)+D(x,\lambda)\cdot\nabla w=G_{\delta,h}  & \mbox{in}\ \Omega ,
\\
w=0 &\mbox{on}\ \partial\Omega,
\end{array}
\right.
\ee
with $$G_{\delta,h}=[\gamma(\lambda+\delta)-\gamma(\lambda)]\mathcal A w_{\lambda+\delta,h}+[D(x,\lambda+\delta)-D(x,\lambda)]\cdot\nabla w_{\lambda+\delta,h}.$$
Using the fact that $\gamma\in \mathcal C^3(\R;(0,+\infty))$, $D\in \mathcal C^2(\overline{\Omega}\times\R;\R^n)$ and applying \cite[Theorem 8.3]{GT}, we deduce that there exists a constant $C>0$ depending only on $\lambda$, $a$, $\Omega$, $\gamma$ and $D$ such that
$$\norm{w_{s,h}}_{H^1(\Omega)}+\norm{\mathcal Aw_{s,h}}_{L^2(\Omega)}\leq C\norm{h}_{H^{\frac{1}{2}}(\partial\Omega)},\quad s\in[\lambda-1,\lambda+1].$$
Therefore, we find 
$$\norm{G_{\delta,h}}_{L^2(\Omega)}\leq C\left[|\gamma(\lambda+\delta)-\gamma(\lambda)|+\norm{D(\cdot,\lambda+\delta)-D(\cdot,\lambda)}_{L^\infty(\Omega)}\right]\norm{h}_{H^{\frac{1}{2}}(\partial\Omega)},$$
with $C>0$ independent of $\delta$ and $h$. Applying again \cite[Theorem 8.3]{GT}, we obtain
$$\norm{w}_{H^1(\Omega)}+\norm{\mathcal Aw}_{L^2(\Omega)}\leq C\left[|\gamma(\lambda+\delta)-\gamma(\lambda)|+\norm{D(\cdot,\lambda+\delta)-D(\cdot,\lambda)}_{L^\infty(\Omega)}\right]\norm{h}_{H^{\frac{1}{2}}(\partial\Omega)}$$
and, using the fact that $\gamma\in \mathcal C^3(\R;(0,+\infty))$, $D\in \mathcal C^2(\overline{\Omega}\times\R;\R^n)$, we get \eqref{l11a}.
\end{proof}

Combining \eqref{l1b} with Lemma \ref{l11},  we deduce that \eqref{mesure} is lying in $\mathcal C(\R)$.

Now let us consider the linearization of problem \eqref{eq1} when condition (ii) is fulfilled. For this purpose, we use the notation of Theorem \ref{t2} and we assume that the function $G$ in \eqref{non2}  satisfies 
$$|G(t)|+|G'(t)|+|G''(t)|\leq \kappa(|t|),\quad t\in\R,$$
with $\kappa\in\mathcal C([0,+\infty))$ a non-decreasing function. Then, according to \cite[Theorem 8.3, pp. 301]{LU} and \cite[Theorem 9.3, 9.4]{GT}, for $g=\lambda\chi+h$, with $\lambda\in[-R,R]$, $R>0$ and $h\in B_1$, the  problem \eqref{eq1} admits a unique solution $u\in \mathcal C^{2+\alpha}(\overline{\Omega})$ satisfying 
$\norm{u}_{L^\infty(\Omega)}\leq M(R)$ with $M$ a non-decreasing function of $\R_+$ depending only on $a$, $\Omega$, $\chi$ and $\kappa$.
We fix $h \in\mathcal C^{2+\alpha}(\partial\Omega)$, $\tau_h=\frac{1}{\norm{h}_{\mathcal C^{2+\alpha}(\partial\Omega)}+1}$, $s\in[-\tau_h,\tau_h]$, $\lambda\in\R$ and we consider the following boundary value problem 
\bel{eq7}
\left\{
\begin{array}{ll}
-\sum_{i,j=1}^d \partial_{x_i} 
\left( a_{i,j}(x) \partial_{x_j} v_{\lambda,s} \right)+G( v_{\lambda,s})=0  & \mbox{in}\ \Omega ,
\\
v_{\lambda,s}=\lambda\chi+sh &\mbox{on}\ \partial\Omega.
\end{array}
\right.
\ee
We consider also the solution of the following linear problem
\bel{eq8}
\left\{
\begin{array}{ll}
-\sum_{i,j=1}^d \partial_{x_i} 
\left( a_{i,j}(x) \partial_{x_j} p \right)+q_{\lambda,G}(x) p=0  & \mbox{in}\ \Omega ,
\\
p=h &\mbox{on}\ \partial\Omega,
\end{array}
\right.
\ee
where $q_{\lambda,G}(x)=G'(v_{\lambda,s}|_{s=0}(x))$, $x\in\Omega$. Then, following the argumentation of \cite{Is4}, we can show that.

\begin{lem}\label{l2} For all $h\in \mathcal C^{2+\alpha}(\partial\Omega)$, the map $[-\tau_h,\tau_h]\ni s\longmapsto v_{s,\lambda}$ is lying in $\mathcal C^1([-\tau_h,\tau_h];\mathcal C^{2+\alpha}(\overline{\Omega}))$ and we have $\partial_su_{s,\lambda}|_{s=0}=p$ with $p$ the unique solution of \eqref{eq8}.\end{lem}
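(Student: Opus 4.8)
The plan is to run the implicit function theorem argument of Proposition~\ref{p1}, but now linearizing \eqref{eq7} around the \emph{non-constant} solution $\bar v:=v_{0,\lambda}\in\mathcal C^{2+\alpha}(\overline\Omega)$ of \eqref{eq1} with $g=\lambda\chi$. Fixing $s_0\in[-\tau_h,\tau_h]$, I would introduce the map
$$\mathcal G:\mathcal C^{2+\alpha}(\partial\Omega)\times\mathcal C^{2+\alpha}(\overline\Omega)\to\mathcal C^{\alpha}(\overline\Omega)\times\mathcal C^{2+\alpha}(\partial\Omega),\qquad \mathcal G(f,v)=\bigl(-\mathcal Av+G(v),\ v_{|\partial\Omega}-f\bigr),$$
with $\mathcal A$ as in \eqref{A}. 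Since $G\in\mathcal C^2(\R)$, arguing exactly as in Proposition~\ref{p1} with \cite[Theorem A.7, A.8]{H} shows that $v\mapsto G(v)$ is $\mathcal C^1$ from $\mathcal C^{2+\alpha}(\overline\Omega)$ to $\mathcal C^{\alpha}(\overline\Omega)$ with differential $w\mapsto G'(v)w$, hence $\mathcal G$ is $\mathcal C^1$. Writing $v_{s_0}:=v_{s_0,\lambda}$ we have $\mathcal G(\lambda\chi+s_0h,v_{s_0})=(0,0)$ and $\partial_v\mathcal G(\lambda\chi+s_0h,v_{s_0})w=\bigl(-\mathcal Aw+G'(v_{s_0})w,\ w_{|\partial\Omega}\bigr)$.

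The crucial point is the monotonicity of $G$: it forces $q_{s_0}:=G'(v_{s_0})\geq0$ on $\overline\Omega$ (and $q_{s_0}\in\mathcal C^{\alpha}(\overline\Omega)$), so by \cite[Theorem 6.8]{GT} the linear Dirichlet problem $-\mathcal Aw+q_{s_0}w=F$ in $\Omega$, $w=f$ on $\partial\Omega$, is uniquely solvable in $\mathcal C^{2+\alpha}(\overline\Omega)$ with $\norm{w}_{\mathcal C^{2+\alpha}(\overline\Omega)}\leq C(\norm{F}_{\mathcal C^{\alpha}(\overline\Omega)}+\norm{f}_{\mathcal C^{2+\alpha}(\partial\Omega)})$. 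Thus $\partial_v\mathcal G(\lambda\chi+s_0h,v_{s_0})$ is an isomorphism from $\mathcal C^{2+\alpha}(\overline\Omega)$ onto $\mathcal C^{\alpha}(\overline\Omega)\times\mathcal C^{2+\alpha}(\partial\Omega)$, and the implicit function theorem produces $\eta>0$ and a $\mathcal C^1$ map $s\mapsto\widehat v(s)$ on $(s_0-\eta,s_0+\eta)$ with $\widehat v(s_0)=v_{s_0}$ and $\mathcal G(\lambda\chi+sh,\widehat v(s))=(0,0)$; that is, $\widehat v(s)$ is a $\mathcal C^{2+\alpha}(\overline\Omega)$ solution of \eqref{eq7}. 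By the uniqueness statement of \cite[Theorem 8.3, pp. 301]{LU} and \cite[Theorem 9.3, 9.4]{GT} we get $\widehat v(s)=v_{s,\lambda}$ for $s\in(s_0-\eta,s_0+\eta)\cap[-\tau_h,\tau_h]$. Covering the compact interval $[-\tau_h,\tau_h]$ by finitely many such neighbourhoods then yields $[-\tau_h,\tau_h]\ni s\mapsto v_{s,\lambda}\in\mathcal C^1([-\tau_h,\tau_h];\mathcal C^{2+\alpha}(\overline\Omega))$.

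Finally, to identify the derivative at $s=0$ I would differentiate \eqref{eq7} in $s$ at $s=0$ (legitimate by the $\mathcal C^1$-regularity just proved and the chain rule, since $v_{0,\lambda}=\bar v$), obtaining
$$-\mathcal A\bigl(\partial_sv_{s,\lambda}|_{s=0}\bigr)+G'(\bar v)\,\partial_sv_{s,\lambda}|_{s=0}=0\ \ \text{in}\ \Omega,\qquad \partial_sv_{s,\lambda}|_{s=0}=h\ \ \text{on}\ \partial\Omega,$$
which is exactly \eqref{eq8} with $q_{\lambda,G}=G'(v_{0,\lambda})$; uniqueness for \eqref{eq8} (again \cite[Theorem 6.8]{GT}, using $q_{\lambda,G}\geq0$) then gives $\partial_sv_{s,\lambda}|_{s=0}=p$. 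The main obstacle, and the essential use of the hypothesis that $G$ is non-decreasing, is the isomorphism property of $\partial_v\mathcal G$: unlike in Lemma~\ref{l1} one linearizes about the non-constant state $v_{0,\lambda}$, so the linearized operator carries the variable zeroth order coefficient $G'(v_{0,\lambda})$, and only its nonnegativity guarantees well-posedness of the associated Dirichlet problem needed both for the implicit function theorem and for the characterization of $p$. Alternatively one could follow \cite{Is4} directly: write $v_{s,\lambda}-\bar v$ as the solution of a linear equation with the nonnegative potential $\int_0^1G'(\bar v+t(v_{s,\lambda}-\bar v))\,\dd t$, use the resulting uniform $\mathcal C^{2+\alpha}$ bound to control the difference quotient, and pass to the limit.
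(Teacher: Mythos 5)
The paper does not actually supply a proof of Lemma~\ref{l2}: it only says ``following the argumentation of \cite{Is4}, we can show that'' and leaves the details to the reader, so there is no written argument to match yours against. What you propose is a legitimate, self-contained proof, and it is in fact closer in spirit to the paper's own treatment of the quasilinear case (Proposition~\ref{p1} and Lemma~\ref{l1}) than to \cite{Is4}: you run the implicit function theorem, but --- correctly identifying the new difficulty --- you linearize around the non-constant state $v_{0,\lambda}$ at \emph{every} $s_0\in[-\tau_h,\tau_h]$, use the monotonicity of $G$ to make the linearized Dirichlet problem with potential $G'(v_{s_0,\lambda})\geq 0$ an isomorphism, and glue the local branches by uniqueness. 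This buys $\mathcal C^1$ regularity on the whole interval (not just near $s=0$), which is exactly what the lemma asserts, whereas the \cite{Is4} route you sketch at the end (difference quotients controlled by Schauder estimates and the nonnegative averaged potential $\int_0^1G'(\bar v+t(v_{s,\lambda}-\bar v))\,dt$) gives the derivative at $s=0$ more directly but requires extra work for $\mathcal C^1$ dependence in $s$. Your identification of $\partial_s v_{s,\lambda}|_{s=0}$ with $p$ via uniqueness for \eqref{eq8} is correct.

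Two small points. First, a sign slip: with the paper's convention \eqref{A}, equation \eqref{eq7} reads $\mathcal A v+G(v)=0$, so your map should be $\mathcal G(f,v)=\bigl(\mathcal Av+G(v),\,v_{|\partial\Omega}-f\bigr)$ and the linearized problem $\mathcal Aw+q_{s_0}w=F$, i.e.\ $-\Delta w+q_{s_0}w=F$; as written, $-\mathcal Aw+q_{s_0}w=F$ is $\Delta w+q_{s_0}w=F$, for which $q_{s_0}\geq0$ has the wrong sign for \cite[Theorem 6.8]{GT}. The surrounding reasoning makes clear this is only a typo. Second, the $\mathcal C^1$ Fr\'echet differentiability of $v\mapsto G(v)$ from $\mathcal C^{2+\alpha}(\overline\Omega)$ to $\mathcal C^{\alpha}(\overline\Omega)$ is slightly delicate when $G$ is merely $\mathcal C^2$ (the remainder estimate in the H\"older seminorm wants some modulus of continuity on $G''$); this fine point is inherited from the paper's hypothesis (ii) and is not specific to your argument, but it is worth being aware that the difference-quotient route in Sobolev or $\mathcal C^{1,\alpha}$ norms sidesteps it.
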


Let us also define the partial DN map $\mathcal D_{q_{\lambda,G}}:h\mapsto \partial_{\nu_a} p_{|S}$ associated with problem \eqref{eq8}. Applying Lemma \ref{l2} and assuming that condition (ii) is fulfilled, we get
\bel{l2b} \partial_s \mathcal N_{\gamma,F}(\lambda\chi+sh)|_{s=0}=\mathcal D_{q_{\lambda,G}}h,\quad h\in  \mathcal C^{2+\alpha}(\partial\Omega).\ee
Let us also observe that according to the above discussions, we have $q_{\lambda,G}\in L^\infty(\Omega)$ and, for all $R>0$, we have 
\bel{l2c}\norm{q_{\lambda,G}}_{L^\infty(\Omega)}\leq \sup_{s\in[-M(R),M(R)]}|G'(s)|\leq \sup_{s\in[0,M(R)]}|\kappa(s)|\leq \kappa(M(R)),\quad \lambda\in [-R,R].\ee
Combining this estimate with the above argumentation, one can check that the map \eqref{mesure1} is lying in $\mathcal C(\R)$.

\section{Special solutions for the linearized problem}
In the spirit of the works \cite{A,EPS1,Is2}, the proofs of our main results are based on constructions of suitable class of singular solutions of the linearized problems \eqref{eq6} and \eqref{eq8}. In contrast to these last works, we need to consider a construction with variable second order coefficients and low order unknown coefficients. Moreover, we will localize the trace of such solutions on $\partial\Omega$ to the arbitrary subset $S$. For this purpose, we will introduce a new construction of these class of solutions designed for our inverse problem. We consider separately the  construction of these class of singular solutions for Theorem \ref{t1} and Theorem \ref{t2}.

\subsection{Special solutions for Theorem \ref{t1}}
We denote by $\Omega_\star$  a smooth open bounded set of $\R^n$ such that $\overline{\Omega}\subset \Omega_\star$ and we extend $a=(a_{i,j})_{1 \leq i,j \leq n}$ into a function of $\mathcal C^2(\overline{\Omega_\star};\R^{n^2})$ still denoted by $a=(a_{i,j})_{1 \leq i,j \leq n}$ and satisfying $a_{i,j}=a_{j,i}$ and
$$\sum_{i,j=1}^d a_{i,j}(x) \xi_i \xi_j \geq c |\xi|^2, \quad 
\mbox{for each $x \in \overline{\Omega_\star},\ \xi=(\xi_1,\ldots,\xi_n) \in \R^n$}.$$
From now on, for all $x\in\R^n$ and $r>0$, we set $B(x,r):=\{y\in\R^n:\ |y-x|<r\}$.

We introduce the function 
$$\rho(x,y):=\left(a(y)(x-y)\cdot (x-y)\right)^{\frac{1}{2}},\quad y\in \Omega_\star,\ x\in \R^n$$
and the function $f_n$ on $(0,+\infty)$ by
$$f_n(t):=\left\{\begin{aligned}-\frac{1}{2\pi}\ln(t)\quad &\textrm{for $n=2$},\\ \frac{t^{2-n}}{(n-2)d_n\sqrt{det(a(y))}}\quad&\textrm{for $n\geq3$},\end{aligned}\right.$$
with $d_n$ the area of the unit sphere.  We define
$$H(x,y):= f_n(\rho(x,y)),\quad y\in \Omega_\star,\ x\in\R^n\setminus\{y\}.$$
It is well known (see e.g. \cite[pp. 258]{Ka}) that $H$ satisfies the following estimates

\bel{est1} \begin{aligned}&|H(x,y)|\leq c_1\abs{\ln\left(\frac{|x-y|}{n^2\norm{a}_{L^\infty(\Omega)}}\right)},\quad y\in \Omega_\star,\ x\in\R^n\setminus\{y\},\  n=2,\\
&c_0|x-y|^{2-n}\leq H(x,y)\leq c_1|x-y|^{2-n},\quad y\in \Omega_\star,\ x\in\R^n\setminus\{y\},\ n\geq 3,\end{aligned}\ee
 where $c_j>0$, $j=0,1$, are  constants that depend only on $\Omega$ and $a$. In addition, for any $k=1,\ldots,n$, one can check that
\bel{est3} c_2|x-y|^{1-n}\leq |\nabla_x H(x,y)|\leq c_3|x-y|^{1-n},\quad y\in \Omega_\star,\ x\in\R^n\setminus\{y\},\ee
\bel{esti1} |\nabla_x\partial_{x_k} H(x,y)|\leq c_4|x-y|^{-n},\quad y\in \Omega_\star,\ x\in\R^n\setminus\{y\},\ee
where $c_j>0$, $j=2,\ldots,4,$ are  constants that depend only on $\Omega$ and $a$.
We fix $U=\{(x,x):\ x\in \Omega_\star\}$. It is well known (see e.g. \cite[Theorem 3]{Ka}) that there exists a function $P\in \mathcal C(\Omega_\star\times \Omega_\star\setminus U)$ taking the form $P=H+\mathcal R$ such that for all $y\in\Omega_\star$ we have $P(\cdot,y)\in \mathcal C^{2}(\Omega_\star\setminus\{y\})$ and
\bel{fond1}\sum_{i,j=1}^d \partial_{x_i} 
\left( a_{i,j}(x) \partial_{x_j} P(x,y) \right)=0,\quad x\in \Omega_\star\setminus\{y\},\ee
\bel{fond2}| \mathcal R(x,y)|\leq C|x-y|^{\frac{5}{2}-n},\quad |\nabla_x\mathcal R(x,y)|\leq C|x-y|^{\frac{3}{2}-n},\quad y\in \Omega_\star,\ x\in\R^n\setminus\{y\},\ee
with $C>0$ depending only on $\Omega$ and $a$.
We fix $x_0\in S$ and using boundary normal coordinates (see e.g. \cite[Theorem 2.12]{KKL}) we fix $\delta'>0$ sufficiently small such that for all $\tau\in(0,\delta')$ there exists a unique $y_\tau\in \Omega_\star\setminus \overline{\Omega}$ such that dist$(y_\tau,\partial\Omega)=|y_\tau-x_0|=\tau$. We fix $S'$ an open subset of $S$ containing $x_0$ and we consider $\Omega'$ an open bounded set of $\R^n$ with $\mathcal C^2$ boundary such that $\overline{\Omega'}\subset \Omega_\star$, $\Omega\subset \Omega'$ and $(\partial\Omega\setminus S')\subset \partial\Omega'$, $x_0\in\Omega'$ (see Figure 1). Fixing $\delta=\min($dist$(x_0,\partial\Omega')/3,1,\delta')>0$,  for all $\tau\in(0,\delta)$ we have dist$(y_\tau,\partial\Omega')\geq\delta$.

\begin{figure}[!ht]
  \centering
  \includegraphics[width=0.6\textwidth]{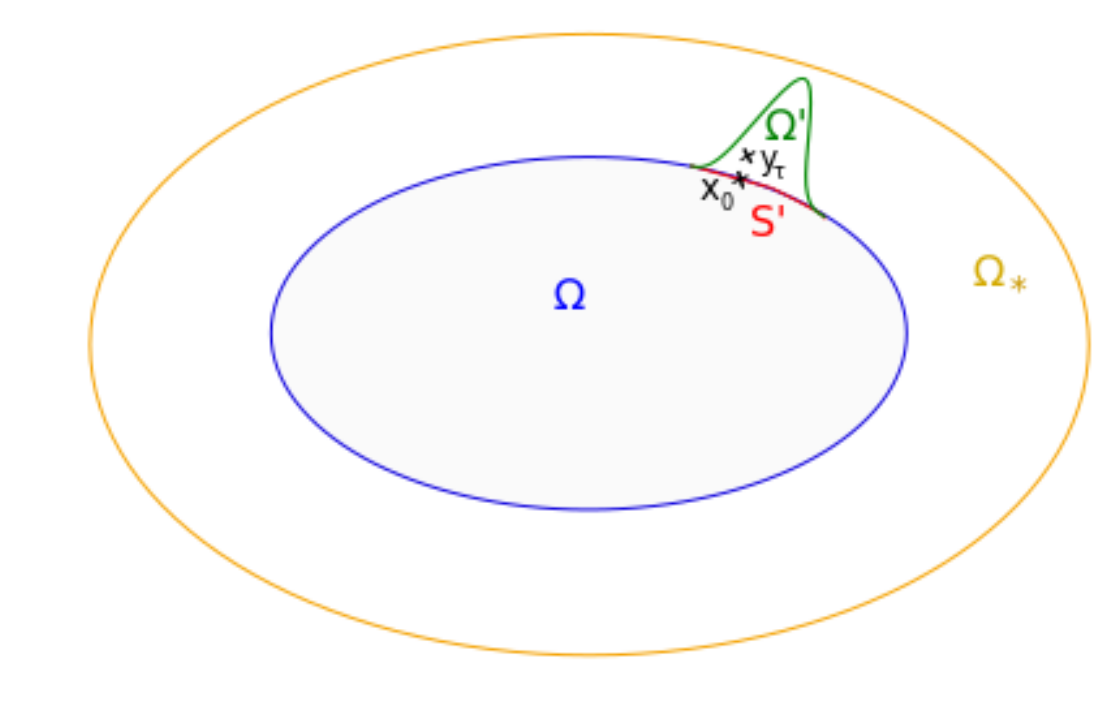}
  \caption{The sets $\Omega$, $\Omega_\star$, $\Omega'$ and the points $x_0$, $y_\tau$.  }
  \label{fig1}
\end{figure}

Then, we consider $w_\tau^0(x)$ the solution of the following boundary value problem
$$\left\{
\begin{array}{ll}
-\sum_{i,j=1}^d \partial_{x_i} 
\left( a_{i,j}(x) \partial_{x_j} w_\tau^0(x) \right)=0  & \mbox{in}\ \Omega' ,
\\
w_\tau^0=P(\cdot,y_\tau) &\mbox{on}\ \partial\Omega',
\end{array}
\right.$$
and for all $\tau\in (0,\delta)$ we deduce that $w_\tau^0\in H^2(\Omega)$ and, applying estimates \eqref{est1}-\eqref{est3} and \eqref{fond2}, we get 
\bel{est4}\norm{w_\tau^0}_{H^1(\Omega')}\leq C\norm{P(\cdot,y_\tau)}_{H^{\frac{1}{2}}(\partial\Omega')}\leq C,\ee
with $C$ depending only on $\Omega'$ and $a$. We set $g_\tau^1\in H^{\frac{3}{2}}(\partial\Omega)$ defined by
\bel{g1}g_\tau^1(x)=P(x,y_\tau)-w_\tau^0(x),\quad x\in\partial\Omega,\ \tau\in(0,\delta).\ee
Note that here we have
$$g_\tau^1(x)=P(x,y_\tau)-w_\tau^0(x)=P(x,y_\tau)-P(x,y_\tau)=0,\quad x\in\partial\Omega\setminus S',$$
which implies that supp$(g_\tau)\subset S$.
We fix $B\in W^{1,\infty}(\Omega;\R^n)$  such that $\nabla\cdot B\leq0$
and, for $s\in(0,+\infty)$,  $\tau\in(0,\delta)$, we consider $w_{s,\tau}\in H^2(\Omega)$  the solution of the following boundary value problem
\bel{eqq1}\left\{
\begin{array}{ll}
-s\sum_{i,j=1}^d \partial_{x_i} 
\left( a_{i,j}(x) \partial_{x_j} w_{s,\tau}(x) \right)+B(x)\cdot\nabla w_{s,\tau} =0  & \mbox{in}\ \Omega ,
\\
w_{s,\tau}=g_\tau^1 &\mbox{on}\ \partial\Omega.
\end{array}
\right.\ee
For $s\in(0,+\infty)$,  $\tau\in(0,\delta)$, we consider also $w_{s,\tau}^*\in H^2(\Omega)$  the solution of the adjoint problem
\bel{eqq2}\left\{
\begin{array}{ll}
-s\sum_{i,j=1}^d \partial_{x_i} 
\left( a_{i,j}(x) \partial_{x_j} w_{s,\tau}^*(x) \right)-\nabla\cdot(B(x) w_{s,\tau}^*) =0  & \mbox{in}\ \Omega ,
\\
w_{s,\tau}^*=g_\tau^1 &\mbox{on}\ \partial\Omega.
\end{array}
\right.\ee
Note that the well-posedness of \eqref{eqq1}-\eqref{eqq2} can be deduced from \cite[Theorem 8.3]{GT}.
We can prove the following result.
\begin{prop}\label{p2} For all $s\in(0,+\infty)$ and all $\tau\in (0,\delta)$, we have 
\bel{p2a}w_{s,\tau}(x)=H(x,y_\tau) +z_{s,\tau}(x),\quad w_{s,\tau}^*(x)=H(x,y_\tau) +z_{s,\tau}^*(x),\ee
where $z_{s,\tau},z_{s,\tau}^*\in H^2(\Omega)$ satisfy the estimate
\bel{p2b}\norm{z_{s,\tau}}_{H^1(\Omega)}+\norm{z_{s,\tau}^*}_{H^1(\Omega)}\leq C\max(1,\tau^{\frac{3}{2}-\frac{n}{2}}),\quad s\in(0,+\infty),\ \tau\in (0,\delta)\ee
with $C>0$ depending only on $a$, $s$, $\Omega$, $B$.\end{prop}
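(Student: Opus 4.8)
\emph{Strategy.} The plan is to strip the singular profile off $w_{s,\tau}$ in two stages. First I subtract the function $\Phi_\tau:=P(\cdot,y_\tau)-w_\tau^0$, which carries the same Dirichlet data $g_\tau^1$ as $w_{s,\tau}$ on $\partial\Omega$ and is $\mathcal A$-harmonic in $\Omega$; since $P=H+\mathcal R$, this already produces the $H(\cdot,y_\tau)$-term plus the explicitly controlled pieces $\mathcal R(\cdot,y_\tau)$ and $w_\tau^0$. Then I estimate the leftover term $\zeta_{s,\tau}:=w_{s,\tau}-\Phi_\tau$ by an energy identity, after rewriting the first-order source in divergence form. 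I would carry out the details for $w_{s,\tau}$ and indicate the (entirely analogous) changes for $w_{s,\tau}^*$.

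\emph{The harmonic profile.} Since $y_\tau\in\Omega_\star\setminus\overline\Omega$ with $\mathrm{dist}(y_\tau,\overline\Omega)=\tau$, the function $P(\cdot,y_\tau)$ is of class $\mathcal C^2$ on $\overline\Omega$ and, together with $w_\tau^0\in H^2(\Omega)$, is $\mathcal A$-harmonic in $\Omega$ by \eqref{fond1} and the equation defining $w_\tau^0$; hence $\Phi_\tau\in H^2(\Omega)$ solves $\mathcal A\Phi_\tau=0$ in $\Omega$, $\Phi_\tau=g_\tau^1$ on $\partial\Omega$. Using $\tau\leq|x-y_\tau|\leq C$ for $x\in\overline\Omega$ and integrating the pointwise bounds \eqref{est1} and \eqref{fond2} in polar coordinates centered at $y_\tau$ gives
\[
\|\Phi_\tau\|_{L^2(\Omega)}\leq C\max\bigl(1,\tau^{\frac32-\frac n2}\bigr),\qquad \|\mathcal R(\cdot,y_\tau)\|_{H^1(\Omega)}\leq C\max\bigl(1,\tau^{\frac32-\frac n2}\bigr),
\]
and $\|w_\tau^0\|_{H^1(\Omega)}\leq C$ by \eqref{est4}, the constants depending only on $a$, $\Omega$ and the sets $\Omega_\star,\Omega'$ fixed above.

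\emph{The remainder.} With $\zeta_{s,\tau}:=w_{s,\tau}-\Phi_\tau\in H^2(\Omega)$, subtracting $\mathcal A\Phi_\tau=0$ from \eqref{eqq1} and rewriting the drift term gives
\[
-s\mathcal A\zeta_{s,\tau}+B\cdot\nabla\zeta_{s,\tau}=-B\cdot\nabla\Phi_\tau=-\nabla\cdot(\Phi_\tau B)+(\nabla\cdot B)\,\Phi_\tau\ \text{ in }\Omega,\qquad \zeta_{s,\tau}=0\ \text{ on }\partial\Omega.
\]
Testing against $\zeta_{s,\tau}$, the drift term on the left contributes $\int_\Omega(B\cdot\nabla\zeta_{s,\tau})\zeta_{s,\tau}=-\tfrac12\int_\Omega(\nabla\cdot B)\zeta_{s,\tau}^2\geq0$ because $\nabla\cdot B\leq0$, while — integrating the divergence term by parts and applying Poincar\'e's inequality to the $(\nabla\cdot B)\Phi_\tau$ term — the right-hand side is bounded by $C\|\Phi_\tau\|_{L^2(\Omega)}\|\nabla\zeta_{s,\tau}\|_{L^2(\Omega)}$; the ellipticity \eqref{ell} then yields $\|\zeta_{s,\tau}\|_{H^1(\Omega)}\leq\tfrac Cs\|\Phi_\tau\|_{L^2(\Omega)}\leq C\max(1,\tau^{\frac32-\frac n2})$, with $C=C(a,\Omega,B,s)$. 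This is the step I expect to be the crux: a direct split $z_{s,\tau}=w_{s,\tau}-H(\cdot,y_\tau)$ would force one to control $\mathcal A H(\cdot,y_\tau)=-\mathcal A\mathcal R(\cdot,y_\tau)$, i.e.\ second derivatives of $\mathcal R$ not supplied by \eqref{fond2}, and would involve $B\cdot\nabla H(\cdot,y_\tau)$, too singular to lie in $L^2$ at the required rate. Peeling off the $\mathcal A$-harmonic $\Phi_\tau$ removes the first difficulty, and writing $B\cdot\nabla\Phi_\tau$ in divergence form lets the energy estimate see only $\|\Phi_\tau\|_{L^2}$ instead of the much larger $\|\nabla\Phi_\tau\|_{L^2}$, so the whole remainder inherits the rate $\tau^{\frac32-\frac n2}$ coming from the extra half-power of $\tau$ that $\mathcal R$ gains over $H$ in \eqref{fond2}.

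\emph{Conclusion.} Since $P=H+\mathcal R$ we get $w_{s,\tau}=H(\cdot,y_\tau)+z_{s,\tau}$ with $z_{s,\tau}:=\mathcal R(\cdot,y_\tau)-w_\tau^0+\zeta_{s,\tau}\in H^2(\Omega)$, and adding the three bounds above gives \eqref{p2b} for $w_{s,\tau}$. For $w_{s,\tau}^*$ one argues identically with \eqref{eqq2} in place of \eqref{eqq1}: setting $\zeta_{s,\tau}^*:=w_{s,\tau}^*-\Phi_\tau$, one finds $-s\mathcal A\zeta_{s,\tau}^*-\nabla\cdot(B\zeta_{s,\tau}^*)=\nabla\cdot(B\Phi_\tau)$ in $\Omega$ with zero boundary data; the source is already in divergence form, the same test-function computation (again using $\nabla\cdot B\leq0$ to absorb the drift term) yields $\|\zeta_{s,\tau}^*\|_{H^1(\Omega)}\leq\tfrac Cs\|\Phi_\tau\|_{L^2(\Omega)}$, and one concludes in the same way with $z_{s,\tau}^*:=\mathcal R(\cdot,y_\tau)-w_\tau^0+\zeta_{s,\tau}^*$.
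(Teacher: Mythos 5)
Your proof is correct and follows essentially the same route as the paper: the same decomposition $w_{s,\tau}=H(\cdot,y_\tau)+\mathcal R(\cdot,y_\tau)-w_\tau^0+\zeta_{s,\tau}$ with $\zeta_{s,\tau}$ solving a zero-Dirichlet problem whose source $-B\cdot\nabla\Phi_\tau$ is measured in $H^{-1}$ through $\norm{B}_{W^{1,\infty}}\norm{\Phi_\tau}_{L^2}$, which is exactly the paper's bound on its remainder $h_{s,\tau}$. The only cosmetic difference is that the paper invokes the a priori estimate of Gilbarg--Trudinger for this last step, whereas you derive it by hand via the energy identity and the sign condition $\nabla\cdot B\leq 0$; your explicit treatment of the adjoint solution $w_{s,\tau}^*$ matches what the paper leaves as ``treated in a similar fashion.''
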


\begin{proof} We will only show the result for $w_{s,\tau}$ the result for $w_{s,\tau}^*$ being treated in a similar fashion. Let us first observe that we can divide $z_{s,\tau}$ into two terms $z_{s,\tau}(x)=\mathcal R(x,y_\tau)+h_{s,\tau}(x)$ where $h_{s,\tau}$ solves the problem
\bel{eqq3}\left\{
\begin{array}{ll}
-s\sum_{i,j=1}^d \partial_{x_i} 
\left( a_{i,j}(x) \partial_{x_j} h_{s,\tau}(x) \right)+B(x)\cdot\nabla h_{s,\tau}(x) =-B(x)\cdot\nabla (P(x,y_\tau)+w_\tau^0(x))  & \ x\in\Omega ,
\\
h_{s,\tau}=0 &\mbox{on}\ \partial\Omega.
\end{array}
\right.\ee
According to \cite[Theorem 8.1]{GT}, there exists  $C>0$ depending only on $a$, $s$, $\Omega$, $B$ such that
\bel{p2c}\norm{h_{s,\tau}}_{H^1(\Omega)}\leq C\norm{B\cdot\nabla (P(\cdot,y_\tau)+w_\tau^0)}_{H^{-1}(\Omega)}\leq C\norm{B}_{W^{1,\infty}(\Omega)}\norm{P(\cdot,y_\tau)+w_\tau^0}_{L^2(\Omega)}.\ee
Consider $R>0$ such that $\Omega\subset B(y_\tau,R)$. Using the fact that $B(y_\tau,\tau)\cap\Omega=\emptyset$, we deduce that $\Omega\subset B(y_\tau,R)\setminus  B(y_\tau,\tau)$. Therefore, applying \eqref{est1}, \eqref{fond2} and \eqref{est4}, we obtain 
$$\norm{P(\cdot,y_\tau)+w_\tau^0}_{L^2(\Omega)}^2\leq C\left(1+\int_{B(y_\tau,R)\setminus  B(y_\tau,\tau)}|x-y_\tau|^{4-2n}dx\right)\leq C(1+\tau^{4-n}),\quad \tau\in (0,\delta),$$
with $C>0$ depending only on $a$, $s$, $\Omega$, $B$. Combining this last estimate with \eqref{p2c}, we obtain
$$\norm{h_{s,\tau}}_{H^1(\Omega)}\leq C\max(1,\tau^{2-\frac{n}{2}}),\quad \tau\in (0,\delta).$$
In the same way, estimates \eqref{fond2} implies that
$$\norm{\mathcal R(\cdot,y_\tau)}_{H^1(\Omega)}\leq C\max(1,\tau^{\frac{3}{2}-\frac{n}{2}}),\quad \tau\in (0,\delta).$$
which clearly implies \eqref{p2b}.\end{proof}

Let us also observe that, following the argumentation of Proposition \ref{p2}, we can prove that the estimates \eqref{est1}-\eqref{esti1} and \eqref{fond2} imply 
\bel{g3}\norm{g_\tau^1}_{H^{\frac{1}{2}}(\partial\Omega)}\leq C\max(\tau^{1-\frac{n}{2}},|\ln(\tau)|^{\frac{1}{2}}),\quad \tau\in(0,\delta).\ee

\subsection{Special solutions for Theorem \ref{t2}}
In this subsection assume that  $n\geq3$ and that  condition \eqref{t2b} is fulfilled. Under such assumption, we fix
$$P(x,y)=H(x,y):= \frac{|x-y|^{2-n}}{(n-2)d_n},\quad y\in \R^n,\ x\in\R^n\setminus\{y\}.$$
and we recall that condition \eqref{est1}-\eqref{esti1} are fulfilled.  Note also that in such a context, for any $k=1,\ldots,n$, we have 
$$\Delta_x\partial_{x_k}H(x,y)=\partial_{x_k}\Delta_xH(x,y)=0,\quad y\in\R^n,\ x\in\R^n\setminus\{y\}.$$
We give the same definition as the preceding subsection to the sets $\Omega_\star$, $\Omega'$, $S'$ and the points $x_0$ and $y_\tau$, $\tau\in(0,\delta)$.
Using the fact that $\partial_{x_k}H(\cdot,y_\tau)\in \mathcal C^\infty(\overline{\Omega'}\setminus \{y_\tau\})$, $k=1,\ldots,n$, we can define
 $v_{k,\tau}\in H^1(\Omega')$ the solution of the following boundary value problem
$$\left\{
\begin{array}{ll}
-\Delta  v_{k,\tau}  =0  & \mbox{in}\ \Omega' ,
\\
v_{k,\tau}=\partial_{x_k}H(\cdot,y_\tau) &\mbox{on}\ \partial\Omega'
\end{array}
\right.$$
and we deduce again that
\bel{est6}\norm{v_{k,\tau}}_{H^1(\Omega')}\leq C\norm{\partial_{x_k}H(\cdot,y_\tau)}_{H^{\frac{1}{2}}(\partial\Omega')}\leq C,\ee
with $C$ depending only on $\Omega'$. We set $g_{k,\tau}^2\in H^{\frac{1}{2}}(\partial\Omega)$ defined by
\bel{g2}g_{k,\tau}^2(x)=\partial_{x_k}H(x,y_\tau)-v_{k,\tau}(x),\quad x\in\partial\Omega,\ \tau\in(0,\delta)\ee
and we notice again that supp$(g_{k,\tau}^2)\subset S$. For $\tau\in(0,\delta)$, we consider $w_{k,\tau}\in H^1(\Omega)$ the solution of the following boundary value problem
\bel{eqq4}\left\{
\begin{array}{ll}
-\Delta w_{k,\tau}(x) +qw_{k,\tau}(x) =0  & \mbox{in}\ \Omega ,
\\
w_{k,\tau}=g_{k,\tau}^2 &\mbox{on}\ \partial\Omega,
\end{array}\right.\ee
where $q$ is a non-negative function satisfying
$$\norm{q}_{L^\infty(\Omega)}\leq M$$
for some $M>0$. This solution satisfies the following properties.

\begin{prop}\label{p3} Let condition \eqref{t2b} be fulfilled. Then for all $\tau\in (0,\delta)$, the solution $w_{k,\tau}$ of \eqref{eqq4} takes the form
\bel{p3a}w_{k,\tau}(x)=\partial_{x_k}H(x,y_\tau) +J_{k,\tau}(x),\quad x\in\Omega,\ee
where $J_{k,\tau}\in H^1(\Omega)$ and satisfies the estimate
\bel{p3b}\norm{J_{k,\tau}}_{H^1(\Omega)}\leq C\max(1,\tau^{2-\frac{n}{2}}),\quad  \tau\in (0,\delta),\ee
with $C>0$ depending only on   $\Omega$ and $M$.\end{prop}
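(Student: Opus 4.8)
The plan is to mimic the argument of Proposition~\ref{p2}, the simplifications coming from \eqref{t2b} (so that the second order operator is just $-\Delta$) and from the fact that here $P=H$, with no remainder term $\mathcal R$ to treat. Since $y_\tau\in\Omega_\star\setminus\overline\Omega$, the function $x\mapsto\partial_{x_k}H(x,y_\tau)$ belongs to $\mathcal C^\infty(\overline\Omega)$, and recalling that $\Delta_x\partial_{x_k}H(x,y_\tau)=\partial_{x_k}\Delta_xH(x,y_\tau)=0$ for $x\neq y_\tau$, it is harmonic in $\Omega$; in particular it lies in $H^1(\Omega)$. Therefore $J_{k,\tau}:=w_{k,\tau}-\partial_{x_k}H(\cdot,y_\tau)\in H^1(\Omega)$ and, using the equation \eqref{eqq4} satisfied by $w_{k,\tau}$ together with the definition \eqref{g2} of $g_{k,\tau}^2$, one checks that $J_{k,\tau}$ solves
\begin{equation*}
\left\{
\begin{array}{ll}
-\Delta J_{k,\tau}+q\,J_{k,\tau}=-q\,\partial_{x_k}H(\cdot,y_\tau) & \mbox{in }\Omega,\\
J_{k,\tau}=-v_{k,\tau} & \mbox{on }\partial\Omega.
\end{array}
\right.
\end{equation*}

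To remove the inhomogeneous boundary condition, I would set $\widetilde J_{k,\tau}:=J_{k,\tau}+v_{k,\tau}$, where $v_{k,\tau}$ is restricted to $\Omega$ (which is licit since $\Omega\subset\Omega'$, and $\|v_{k,\tau}\|_{H^1(\Omega)}\leq\|v_{k,\tau}\|_{H^1(\Omega')}\leq C$ by \eqref{est6}). As $v_{k,\tau}$ is harmonic in $\Omega'\supset\Omega$, the function $\widetilde J_{k,\tau}\in H^1(\Omega)$ vanishes on $\partial\Omega$ and solves $-\Delta\widetilde J_{k,\tau}+q\,\widetilde J_{k,\tau}=q(v_{k,\tau}-\partial_{x_k}H(\cdot,y_\tau))$ in $\Omega$. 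Since $q\geq0$ and $\|q\|_{L^\infty(\Omega)}\leq M$, the standard $H^1$ estimate for this problem (see \cite[Theorem 8.1]{GT}, or apply Lax--Milgram and the Poincar\'e inequality to the coercive form $(u,\varphi)\mapsto\int_\Omega(\nabla u\cdot\nabla\varphi+q\,u\varphi)$) gives
\begin{equation*}
\|\widetilde J_{k,\tau}\|_{H^1(\Omega)}\leq C\|q(v_{k,\tau}-\partial_{x_k}H(\cdot,y_\tau))\|_{H^{-1}(\Omega)}\leq C\big(\|v_{k,\tau}\|_{L^2(\Omega)}+\|q\,\partial_{x_k}H(\cdot,y_\tau)\|_{H^{-1}(\Omega)}\big),
\end{equation*}
with $C$ depending only on $\Omega$ and $M$, and the first term on the right is bounded by a constant thanks to \eqref{est6}.

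The delicate point is the estimate of $\|q\,\partial_{x_k}H(\cdot,y_\tau)\|_{H^{-1}(\Omega)}$: since $q$ is merely bounded one cannot transfer a derivative onto $q$, so instead I would use the Sobolev embedding $H^1_0(\Omega)\hookrightarrow L^{\frac{2n}{n-2}}(\Omega)$ in its dual form to get $\|q\,\partial_{x_k}H(\cdot,y_\tau)\|_{H^{-1}(\Omega)}\leq M\,\|\partial_{x_k}H(\cdot,y_\tau)\|_{L^{\frac{2n}{n+2}}(\Omega)}$. Recalling from \eqref{est3} that $|\partial_{x_k}H(x,y_\tau)|\leq c_3|x-y_\tau|^{1-n}$ and that $\Omega\subset B(y_\tau,R)\setminus B(y_\tau,\tau)$ for some $R>0$ depending only on $\Omega$, a direct computation in polar coordinates centred at $y_\tau$ yields $\|\partial_{x_k}H(\cdot,y_\tau)\|_{L^{\frac{2n}{n+2}}(\Omega)}\leq C\max(1,\tau^{2-\frac{n}{2}})$. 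Combining this with $\|J_{k,\tau}\|_{H^1(\Omega)}\leq\|\widetilde J_{k,\tau}\|_{H^1(\Omega)}+\|v_{k,\tau}\|_{H^1(\Omega)}$ and \eqref{est6} produces \eqref{p3b}. I expect this $H^{-1}$ bound to be the main obstacle: measuring $q\,\partial_{x_k}H(\cdot,y_\tau)$ in $L^2(\Omega)$ would cost an extra power of $\tau$, so one is forced to pass to the negative Sobolev norm via Sobolev duality, and the exponent $2-\tfrac{n}{2}$ comes out precisely of balancing the integrability $\tfrac{2n}{n+2}$ imposed by the embedding against the order $n-1$ of the singularity of $\partial_{x_k}H(\cdot,y_\tau)$ at $y_\tau$.
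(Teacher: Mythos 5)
Your proposal is correct and follows essentially the same route as the paper: the paper also subtracts $\partial_{x_k}H(\cdot,y_\tau)$, corrects by $v_{k,\tau}$ to get a problem with homogeneous Dirichlet data (the paper writes $h_{k,\tau}=J_{k,\tau}-v_{k,\tau}$, an apparent sign slip where your $J_{k,\tau}+v_{k,\tau}$ is the consistent choice), bounds the right-hand side in $H^{-1}(\Omega)$ via the dual Sobolev embedding $L^{\frac{2n}{n+2}}(\Omega)\hookrightarrow H^{-1}(\Omega)$, and evaluates $\norm{\partial_{x_k}H(\cdot,y_\tau)}_{L^{\frac{2n}{n+2}}(\Omega)}$ by the same polar-coordinate computation on $B(y_\tau,R)\setminus B(y_\tau,\tau)$ to obtain the exponent $2-\frac{n}{2}$. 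No gaps; your identification of the $H^{-1}$ bound as the crux matches the paper exactly.
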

\begin{proof}In all this proof we denote by $C>0$ a positive constant depending on $\Omega$ and $M$ that may change from line to line.
 Note first that $h_{k,\tau}=J_{k,\tau}-v_{k,\tau}$  solves the problem
\bel{eqq5}\left\{
\begin{array}{ll}
-\Delta h_{k,\tau}+q(x)h_{k,\tau} =-q(\partial_{x_k}H(\cdot,y_\tau)+v_{k,\tau})  & \mbox{in}\ \Omega ,
\\
h_{k,\tau}=0 &\mbox{on}\ \partial\Omega.
\end{array}
\right.\ee
Applying the Sobolev embedding theorem, we deduce that  $H^1_0(\Omega)$ embedded into  $L^{\frac{2n}{n-2}}(\Omega)$. Therefore, by duality we deduce that  the space $L^{\frac{2n}{n+2}}(\Omega)$ embedded into $H^{-1}(\Omega)$. Thus, the solution of \eqref{eqq5} satisfies the estimate
$$\begin{aligned}\norm{h_{k,\tau}}_{H^1(\Omega)}\leq C\norm{q(\partial_{x_k}H(\cdot,y_\tau)+v_{k,\tau}) }_{H^{-1}(\Omega)}&\leq C\norm{q(\partial_{x_k}H(\cdot,y_\tau)+v_{k,\tau}) }_{L^{\frac{2n}{n+2}}(\Omega)}\\
&\leq C\norm{q}_{L^\infty(\Omega)}\norm{(\partial_{x_k}H(\cdot,y_\tau)+v_{k,\tau})}_{L^{\frac{2n}{n+2}}(\Omega)}.\end{aligned}$$
On the other hand, repeating the arguments used in Proposition \ref{p1} and applying estimate \eqref{est3} we can prove that
$$\begin{aligned}\norm{\partial_{x_k}H(\cdot,y_\tau)+v_{k,\tau}}_{L^{\frac{2n}{n+2}}(\Omega)}&\leq C\left(\int_\Omega|x-y_\tau|^{{\frac{2n(1-n)}{n+2}}}dx+ 1\right)^{\frac{n+2}{2n}}\\
&\leq C\left(\int_{B(y_\tau,R)\setminus  B(y_\tau,\tau)}|x-y_\tau|^{{\frac{2n(1-n)}{n+2}}}dx+1\right)^{\frac{n+2}{2n}},\\
&\leq C\left(1+\tau^{\frac{2n(1-n)}{n+2}+n}+1\right)^{\frac{n+2}{2n}}\leq C(1+\tau^{2-\frac{n}{2}})\quad \tau\in (0,\delta),\end{aligned}$$
which implies that 
$$\norm{h_{k,\tau}}_{H^1(\Omega)}\leq C\max(1,\tau^{2-\frac{n}{2}}),\quad \tau\in (0,\delta).$$
Combining this with \eqref{est6} we deduce \eqref{p3b}.\end{proof}

Following the argumentation of Proposition \ref{p3}, we can prove that the estimates \eqref{est1}-\eqref{esti1}  imply 
\bel{g4}\norm{g_{k,\tau}^2}_{H^{\frac{1}{2}}(\partial\Omega)}\leq C\tau^{-\frac{n}{2}},\quad \tau\in(0,\delta),\ee
with $C>0$ depending only on $\Omega$.

\section{Proof of the main results}

\subsection{Proof of Theorem \ref{t1}}
We use here the notation of Section 3.
Let us first observe that in view of \eqref{l1b}, fixing $\mathcal N=\mathcal N_{\gamma_1,F_1}-\mathcal N_{\gamma_2,F_2}$, we have
$$\partial_s \mathcal N(\lambda+sh)|_{s=0}=\Lambda_{\gamma_1(\lambda),D_1(\cdot,\lambda)}h-\Lambda_{\gamma_2(\lambda),D_2(\cdot,\lambda)}h,\quad h\in \mathcal H_S,$$
where we recall that  $\Lambda_{\gamma(\lambda),D(\cdot,\lambda)}$ denotes the partial DN map associated with problem \eqref{eq6}.
Therefore, fixing $H^{\frac{1}{2}}_0(S):=\{f\in H^{\frac{1}{2}}(\partial\Omega):\ \textrm{supp}(f)\subset S\}$ and $\mathcal C^{2+\alpha}_0(S):=\{f\in \mathcal C^{2+\alpha}(\partial\Omega):\ \textrm{supp}(f)\subset S\}$, denoting by $\Lambda^j_{S,\lambda}$, $j=1,2$, the restriction of $\Lambda_{\gamma_j(\lambda),D_j(\cdot,\lambda)}$ to $H^{\frac{1}{2}}_0(S)$ and applying the density of $\mathcal C^{2+\alpha}_0(S)$ in $H^{\frac{1}{2}}_0(S)$, we obtain
$$\begin{aligned}\norm{\Lambda^1_{S,\lambda}-\Lambda^2_{S,\lambda}}_{\mathcal B(H^{\frac{1}{2}}_0(S);H^{-\frac{1}{2}}(S))}&= \sup_{h\in\mathcal H_S}\norm{\Lambda^1_{S,\lambda}h-\Lambda^2_{S,\lambda}h}_{H^{-\frac{1}{2}}(S)}\\
&=\sup_{h\in\mathcal H_S}\norm{\Lambda^1_{S,\lambda}h-\Lambda^2_{S,\lambda}h}_{H^{-\frac{1}{2}}(S)}\\
&=\sup_{h\in\mathcal H_S}\limsup_{\epsilon\to0}\norm{ \frac{\mathcal N(\lambda+\epsilon h)-\mathcal N(\lambda)}{\epsilon}}_{H^{-\frac{1}{2}}(S)}.\end{aligned}$$
It follows that, for all $R>0$, we have
\bel{t1d}\sup_{\lambda\in[-R,R]}\norm{\Lambda^1_{S,\lambda}-\Lambda^2_{S,\lambda}}_{\mathcal B(H^{\frac{1}{2}}_0(S);H^{-\frac{1}{2}}(S))}=\sup_{\lambda\in[-R,R]}\sup_{h\in\mathcal H_S}\limsup_{\epsilon\to0}\frac{\norm{\mathcal N(\lambda+\epsilon h)-\mathcal N(\lambda)}_{H^{-\frac{1}{2}}(S)}}{\epsilon}\ee
and the proof of Theorem \ref{t1} will be completed if we show that there exists a constant $C>0$, depending on $a$, $\Omega$, such that the following estimate 
\bel{t1e}\norm{\gamma_1-\gamma_2}_{L^\infty(-R,R)}\leq C\sup_{\lambda\in[-R,R]}\norm{\Lambda^1_{S,\lambda}-\Lambda^2_{S,\lambda}}_{\mathcal B(H^{\frac{1}{2}}_0(S);H^{-\frac{1}{2}}(S))},\quad R>0,\ee 
holds true. Fixing $R>0$, we define $\lambda_R\in [-R,R]$ such that for $\gamma=\gamma_1-\gamma_2\in\mathcal C^3(\R)$ we have
$$\norm{\gamma_1-\gamma_2}_{L^\infty(-R,R)}=|\gamma(\lambda_R)|$$
and without loss of generality we assume that $\gamma(\lambda_R)\geq0$. In light of condition \eqref{t11a}, for $\tau\in(0,\delta)$ and $j=1,2$, we can consider $w_{j,\tau}$ (resp. $w_{j,\tau}^*$) the solution of \eqref{eqq1} (resp. \eqref{eqq2}) with $s=\gamma_j(\lambda_R)$, $B=D_j(\cdot,\lambda_R)$. We fix $D=D_1(\cdot,\lambda_R)-D_2(\cdot,\lambda_R)$ and we recall that $\mathcal A$ is defined by \eqref{A}.
Fixing $w_\tau=w_{2,\tau}-w_{1,\tau}$, we deduce that $w_\tau$ solves the problem
\bel{t1x}\left\{
\begin{array}{ll}
\gamma_2(\lambda_R)\mathcal A w_\tau+D_2(x,\lambda_R)\cdot\nabla w_\tau =K_\tau  & \mbox{in}\ \Omega ,
\\
w_\tau=0 &\mbox{on}\ \partial\Omega,
\end{array}
\right.\ee
with
$K_\tau=\gamma(\lambda_R)\mathcal Aw_{1,\tau}+D\cdot\nabla w_{1,\tau}$.
Multiplying \eqref{t1x} by $w_{2,\tau}^*$ and integrating by parts, we obtain
$$-\gamma_2(\lambda_R)\int_{\partial\Omega}\partial_{\nu_a}w_\tau w_{2,\tau}^*d\sigma(x)=\int_\Omega K_\tau w_{2,\tau}^*dx.$$
Integrating again by parts, we find
$$\begin{aligned}\int_\Omega K_\tau w_{2,\tau}^*dx=&-\gamma(\lambda_R)\int_{\partial\Omega}\partial_{\nu_a}w_{1,\tau} w_{2,\tau}^*d\sigma(x)+\gamma(\lambda_R)\int_\Omega\sum_{i,j=1}^d 
 a_{i,j}(x) \partial_{x_j} w_{1,\tau}(x)\partial_{x_i}w_{2,\tau}^*(x)dx\\
&+\int_\Omega D\cdot\nabla w_{1,\tau}w_{2,\tau}^*dx.\end{aligned}$$
Combining this with the fact that
$$-\gamma_2(\lambda_R)\partial_{\nu_a}w_\tau(x)+\gamma(\lambda_R)\partial_{\nu_a}w_{1,\tau}(x)=\gamma_1(\lambda_R)\partial_{\nu_a}w_{1,\tau}(x)-\gamma_2(\lambda_R)\partial_{\nu_a}w_{2,\tau}(x),\quad x\in\partial\Omega$$
and using the fact that $g_\tau^1\in H^{\frac{1}{2}}_0(S)$, we obtain the identity
\bel{t1f} \begin{aligned}&\gamma(\lambda_R)\int_\Omega\sum_{i,j=1}^d 
 a_{i,j}(x) \partial_{x_j} w_{1,\tau}(x)\partial_{x_i}w_{2,\tau}^*(x)dx+\int_\Omega (D_1(\cdot,\lambda_R)-D_2(\cdot,\lambda_R))\cdot\nabla w_{1,\tau}(x)w_{2,\tau}^*(x)dx\\
&=\left\langle (\Lambda^1_{S,\lambda_R}-\Lambda^2_{S,\lambda_R})g_\tau^1,g_\tau^1\right\rangle_{H^{-\frac{1}{2}}(S),H^{\frac{1}{2}}_0(S)}.\end{aligned}\ee
In view of Proposition \ref{p2}, we can decompose $w_{1,\tau}$ and $w_{2,\tau}^*$ into two terms
\bel{t1g}w_{1,\tau}(x)=H(x,y_\tau) +z_{1,\tau}(x),\quad w_{2,\tau}^*(x)=H(x,y_\tau) +z_{2,\tau}^*(x),\quad x\in\Omega,\ee
with 
\bel{t1h}\norm{z_{1,\tau}}_{H^1(\Omega)}+\norm{z_{2,\tau}^*}_{H^1(\Omega)}\leq C_1\max(1,\tau^{\frac{3-n}{2}}),\quad \tau\in(0,\delta),\ee
with $C_1>0$ independent of $\tau$. In the same way, applying \eqref{est1}-\eqref{est3} and repeating the argumentation of Proposition \ref{p2}, we get
\bel{t1i} \tau\norm{H(\cdot,y_\tau)}_{H^1(\Omega)}+\norm{H(\cdot,y_\tau)}_{L^2(\Omega)}\leq C_2\max(1,\tau^{2-\frac{n}{2}}),\quad \tau\in(0,\delta),\ee
with $C_2>0$ independent of $\tau$.
Applying estimates \eqref{t1h}-\eqref{t1i}, we obtain
$$\tau \norm{w_{1,\tau}}_{H^1(\Omega)}+\norm{w_{2,\tau}^*}_{L^2(\Omega)}\leq C_3\max(1,\tau^{2-\frac{n}{2}}),\quad \tau\in(0,\delta),$$
with $C_3>0$ independent of $\tau$ and it follows that
$$\begin{aligned}&\abs{\int_\Omega (D_1(\cdot,\lambda_R)-D_2(\cdot,\lambda_R))\cdot\nabla w_{1,\tau}(x)w_{2,\tau}^*(x)dx}\\
&\leq \norm{D_1(\cdot,\lambda_R)-D_2(\cdot,\lambda_R)}_{L^\infty(\Omega)}\norm{w_{1,\tau}}_{H^1(\Omega)}\norm{w_{2,\tau}^*}_{L^2(\Omega)}\\
\ &\leq C_4\max(1,\tau^{3-n}),\quad \tau\in(0,\delta),\end{aligned}$$
with $C_4>0$ a constant independent of $\tau$ which depends on $D_1$, $D_2$, $R$, $\gamma_1$, $\gamma_2$. In the same way, applying \eqref{t1h}-\eqref{t1i} and \eqref{ell}, we get
$$\begin{aligned}&\abs{\int_\Omega\sum_{i,j=1}^d 
 a_{i,j}(x) \partial_{x_j} w_{1,\tau}(x)\partial_{x_i}w_{2,\tau}^*(x)dx}\\
&\geq \int_\Omega\sum_{i,j=1}^d 
 a_{i,j}(x) \partial_{x_j} H(x,y_\tau)\partial_{x_i}H(x,y_\tau)dx -C_5\max(1,\tau^{\frac{5}{2}-n})\\
&\geq c\int_\Omega |\nabla_x H(x,y_\tau)|^2dx-C_5\max(1,\tau^{\frac{5}{2}-n}),\quad \tau\in(0,\delta),\end{aligned}$$
with $C_5>0$ a constant independent of $\tau$ which depends on $D_1$, $D_2$, $R$, $\gamma_1$, $\gamma_2$. In addition, applying \eqref{g3}, for all $\tau\in(0,\delta)$, we obtain 
$$\begin{aligned}|\left\langle (\Lambda^1_{S,\lambda_R}-\Lambda^2_{S,\lambda_R})g_\tau^1,g_\tau^1\right\rangle_{H^{-\frac{1}{2}}(S),H^{\frac{1}{2}}_0(S)}|&\leq \norm{\Lambda^1_{S,\lambda_R}-\Lambda^2_{S,\lambda_R}}_{\mathcal B(H^{\frac{1}{2}}_0(S),H^{-\frac{1}{2}}(S))}\norm{g_\tau^1}_{H^{\frac{1}{2}}(\partial\Omega)}^2\\
&\leq C_*\norm{\Lambda^1_{S,\lambda_R}-\Lambda^2_{S,\lambda_R}}_{\mathcal B(H^{\frac{1}{2}}_0(S),H^{-\frac{1}{2}}(S))}\max(\tau^{2-n},|\ln(\tau)|),\end{aligned}$$
with $C_*>0$ depending only on $\Omega$ and $a$. Combining all these estimates  with \eqref{t1f}, for all $\tau\in(0,\delta)$, we have
\bel{t1j} c\gamma(\lambda_R)\int_\Omega |\nabla_x H(x,y_\tau)|^2dx-\gamma(\lambda_R)C_5\tau^{\frac{5}{2}-n}-C_4\tau^{3-n}\leq C_*\norm{\Lambda^1_{S,\lambda_R}-\Lambda^2_{S,\lambda_R}}_{\mathcal B(H^{\frac{1}{2}}_0(S),H^{-\frac{1}{2}}(S))}\tau^{2-n}.\ee
On the other hand, applying \eqref{est3}, one can find $\delta_1\in(0,\delta)$ such that for all $\tau\in (0,\delta_1)$, we have
$$\begin{aligned}\int_\Omega |\nabla_x H(x,y_\tau)|^2dx&\geq \int_{B(y_\tau,4\tau)\cap\Omega} |\nabla_x H(x,y_\tau)|^2dx\\
&\geq c_2\int_{B(y_\tau,4\tau)\cap\Omega}|x-y_\tau|^{2-2n}\\
&\geq c'\max(\tau^{2-n},|\ln(\tau)|),\quad \tau\in (0,\delta_1),\end{aligned}$$
with $c'>0$ a constant depending on $a$ and $\Omega$. Combining this with \eqref{t1j}, for all $\tau\in (0,\delta_1)$, we obtain
$$\begin{aligned}&cc'\gamma(\lambda_R)\max(\tau^{2-n},|\ln(\tau)|)-\gamma(\lambda_R)C_5\max(1,\tau^{\frac{5}{2}-n})-C_4\max(1,\tau^{3-n})\\
&\leq C_*\norm{\Lambda^1_{S,\lambda_R}-\Lambda^2_{S,\lambda_R}}_{\mathcal B(H^{\frac{1}{2}}_0(S),H^{-\frac{1}{2}}(S))}\max(\tau^{2-n},|\ln(\tau)|).\end{aligned}$$
Dividing both side of this inequality by $\max(\tau^{2-n},|\ln(\tau)|)$ and sending $\tau\to0$, we obtain
$$cc'\gamma(\lambda_R)\leq C_*\norm{\Lambda^1_{S,\lambda_R}-\Lambda^2_{S,\lambda_R}}_{\mathcal B(H^{\frac{1}{2}}_0(S),H^{-\frac{1}{2}}(S))}.$$
Since $c$, $c'$ and $C_*$ are constants depending only on $\Omega$ and $a$, we deduce that there exists $C>0$ depending only on $\Omega$ and $a$ such that
$$\begin{aligned}\norm{\gamma_1-\gamma_2}_{L^\infty(-R,R)}=\gamma(\lambda_R)&\leq C\norm{\Lambda^1_{S,\lambda_R}-\Lambda^2_{S,\lambda_R}}_{\mathcal B(H^{\frac{1}{2}}_0(S),H^{-\frac{1}{2}}(S))}\\
&\leq C\sup_{\lambda\in[-R,R]}\norm{\Lambda^1_{S,\lambda}-\Lambda^2_{S,\lambda}}_{\mathcal B(H^{\frac{1}{2}}_0(S),H^{-\frac{1}{2}}(S))}.\end{aligned}$$
Combining this with \eqref{t1d}, we obtain \eqref{t1c}. This completes the proof of the theorem.

\subsection{Proof of Theorem \ref{t2}}
Fix $R>0$ and consider, for all $\lambda\in[-R,R]$, $q_{j,\lambda}(x)=G_j'(v_{j,\lambda}(x))$, $j=1,2$, with $v_{j,\lambda}$ solving the problem
$$\left\{
\begin{array}{ll}
-\Delta v_{j,\lambda} +G_j( v_{j,\lambda})=0  & \mbox{in}\ \Omega ,
\\
v_{j,\lambda}=\lambda\chi &\mbox{on}\ \partial\Omega.
\end{array}
\right.$$
We denote by $\mathcal D^j_{S,\lambda}$, $j=1,2$, the restriction of $\mathcal D_{q_{j,\lambda}}$ to $H^{\frac{1}{2}}_0(S)$, where we recall that $\mathcal D_{q_{\lambda,G}}$ is the DN map associated with problem \eqref{eq8}. Applying Lemma \ref{l2} and repeating the argumentation of Theorem \ref{t1}, we can prove that
$$\sup_{\lambda\in[-R,R]}\norm{\mathcal D^1_{S,\lambda}-\mathcal D^2_{S,\lambda}}_{\mathcal B(H^{\frac{1}{2}}_0(S);H^{-\frac{1}{2}}(S))}=\sup_{\lambda\in[-R,R]}\sup_{h\in\mathcal H_S}\limsup_{\epsilon\to0}\frac{\norm{\mathcal N(\lambda+\epsilon h)-\mathcal N(\lambda)}_{H^{-\frac{1}{2}}(S)}}{\epsilon}$$
and the proof Theorem \ref{t2} will be completed if we show that for all $R>0$ there exists a constant $C>0$, depending on  $\Omega$, $\kappa$, $\chi$ and $R$ such that the following estimate 
\bel{t2e}\norm{G_1-G_2}_{L^\infty(-R,R)}\leq C\left(\sup_{\lambda\in[-R,R]}\norm{\mathcal D^1_{S,\lambda}-\mathcal D^2_{S,\lambda}}_{\mathcal B(H^{\frac{1}{2}}_0(S);H^{-\frac{1}{2}}(S))}\right)^{\frac{1}{3}}\ee 
holds true. In a similar way to Theorem \ref{t1}, we fix $G=G_1-G_2$ and we consider $\lambda_R\in[-R,R]$ such that $\norm{G_1'-G_2'}_{L^\infty(-R,R)}=|G'(\lambda_R)|$. Without loss of generality we assume that $G'(\lambda_R)\geq0$. Fix $\tau\in (0,\delta)$, $k=1,\ldots,n$ and consider the solution $w_{j,k,\tau}$ of \eqref{eqq4} with $q=q_{j,\lambda_R}$. Note that here, in view of \eqref{t2a} and \eqref{l2c}, there exists $M>0$ depending only on $\Omega$, $\kappa$, $\chi$  and $R$ such that $\norm{q_{j,\lambda_R}}_{L^\infty(\Omega)}\leq M$.
Therefore, applying Proposition \ref{p3}, we can decompose $w_{j,k,\tau}$ into two terms $ w_{j,k,\tau}=\partial_{x_k}H(\cdot,y_\tau)+J_{j,k,\tau}$ with 
\bel{t2f}\norm{J_{j,k,\tau}}_{H^1(\Omega)}\leq C\max(1,\tau^{2-\frac{n}{2}}),\quad  \tau\in (0,\delta),\ee
where $C>0$ is a constant depending only on $\Omega$, $\chi$, $\kappa$ and $R$. In a similar way to Theorem \ref{t1}, integrating by parts, we obtain the identity
$$ \begin{aligned}&\int_\Omega (q_{1,\lambda_R}(x)-q_{2,\lambda_R}(x)) w_{1,k,\tau}(x)w_{2,k,\tau}(x)dx\\
&=\left\langle (\mathcal D^1_{S,\lambda_R}-\mathcal D^2_{S,\lambda_R})g_{k,\tau}^2,g_{k,\tau}^2\right\rangle_{H^{-\frac{1}{2}}(S),H^{\frac{1}{2}}_0(S)}.\end{aligned}$$
Combining this with \eqref{t2f} and \eqref{g4}, we obtain
\bel{t2g}\abs{\int_\Omega (q_{1,\lambda_R}-q_{2,\lambda_R})|\partial_{x_k}H(\cdot,y_\tau)|^2dx} \leq C\left[\max(1,\tau^{2-\frac{n}{2}})+\norm{\mathcal D^1_{S,\lambda_R}-\mathcal D^2_{S,\lambda_R}}_{\mathcal B(H^{\frac{1}{2}}_0(S),H^{-\frac{1}{2}}(S))}\tau^{-n}\right],\ee
where $C>0$ is a constant depending only on $R$, $\chi$, $\kappa$ and $\Omega$. Now let us consider $q=q_{1,\lambda_R}-q_{2,\lambda_R}$ and recall that $q\in \mathcal C^1(\overline{\Omega})$ with
$$\partial_{x_i}q=G_1''(v_{1,\lambda_R})\partial_{x_i}v_{1,\lambda_R}-G_2''(v_{2,\lambda_R})\partial_{x_i}v_{2,\lambda_R},\quad i=1,\ldots,n.$$
On the other hand, applying \eqref{t2a} and following the argumentation after \cite[formula (8.14) pp. 296]{LU} and  \cite[Theorem 9.3, 9.4]{GT}, we deduce that there exists $M_1>0$ depending only on $\kappa$, $\Omega$, $R$ and $\chi$ such that
$$\norm{v_{j,\lambda_R}}_{W^{1,\infty}(\Omega)}\leq M_1.$$
Therefore, there exists $C>0$ depending only on $\kappa$, $\Omega$, $R$ and $\chi$ such that
$$\norm{q}_{W^{1,\infty}(\Omega)}\leq C.$$
Since $\Omega$ is $\mathcal C^{2+\alpha}$, we can extend $q$ into $\tilde{q}\in \mathcal C^1(\R^n)$ satisfying
$$\norm{\tilde{q}}_{W^{1,\infty}(\R^n)}\leq C'\norm{q}_{W^{1,\infty}(\Omega)},$$
with $C'>0$ depending only on $\Omega$. Then, we deduce that there exists $C>0$ depending only on $\kappa$, $\Omega$, $R$ and $\chi$ such that
\bel{t2h}\norm{\tilde{q}}_{W^{1,\infty}(\R^n)}\leq C.\ee
Moreover, we have
$$q(x)=q(x_0)+\int_0^1\nabla \tilde{q}(x_0+s(x-x_0))\cdot (x-x_0)ds=q(x_0)+Q(x)\cdot(x-x_0),\quad x\in\overline{\Omega}$$
and using the fact that 
$$q(x_0)=G_1'(v_{1,\lambda_R}(x_0))-G_2'(v_{2,\lambda_R}(x_0))=G_1'(\lambda_R\chi(x_0))-G_2'(\lambda_R\chi(x_0))=G'(\lambda_R),$$
we obtain 
$$q(x)=G'(\lambda_R)+Q(x)\cdot(x-x_0),\quad x\in\overline{\Omega}.$$
It follows that
$$\int_\Omega (q_{1,\lambda_R}-q_{2,\lambda_R})|\partial_{x_k}H(\cdot,y_\tau)|^2dx=G'(\lambda_R)\int_\Omega |\partial_{x_k}H(\cdot,y_\tau)|^2dx+\int_\Omega Q(x)\cdot(x-x_0)|\partial_{x_k}H(\cdot,y_\tau)|^2dx$$
and applying \eqref{t2h}, we get
$$\abs{\int_\Omega (q_{1,\lambda_R}-q_{2,\lambda_R})|\partial_{x_k}H(\cdot,y_\tau)|^2dx}\geq G'(\lambda_R)\int_\Omega |\partial_{x_k}H(\cdot,y_\tau)|^2dx-C\int_\Omega |x-x_0||\partial_{x_k}H(\cdot,y_\tau)|^2dx.$$
Moreover, applying \eqref{est3}, we obtain
$$\begin{aligned}\int_\Omega |x-x_0||\partial_{x_k}H(\cdot,y_\tau)|^2dx&\leq C\int_\Omega |x-x_0||x-y_{\tau}|^{2-2n}dx\\
\ &\leq C\int_\Omega (|x-y_{\tau}|+|x_0-y_\tau|)|x-y_{\tau}|^{2-2n}dx\\
&\leq C\left(\int_\Omega |x-y_{\tau}|^{3-2n}dx+\tau\int_\Omega |x-y_{\tau}|^{2-2n}dx\right)\end{aligned}$$
and repeating the arguments used in Proposition \ref{p2}, we obtain
$$\abs{\int_\Omega (q_{1,\lambda_R}-q_{2,\lambda_R})|\partial_{x_k}H(\cdot,y_\tau)|^2dx}\geq G'(\lambda_R)\int_\Omega |\partial_{x_k}H(\cdot,y_\tau)|^2dx-C\tau^{3-n},$$
with $C>0$ depending only on $\kappa$, $\Omega$, $R$ and $\chi$. Combining this with \eqref{t2g}, we find
$$G'(\lambda_R)\int_\Omega |\partial_{x_k}H(\cdot,y_\tau)|^2dx \leq C\left[\max(1,\tau^{2-\frac{n}{2}})+\tau^{3-n}+\norm{\mathcal D^1_{S,\lambda_R}-\mathcal D^2_{S,\lambda_R}}_{\mathcal B(H^{\frac{1}{2}}_0(S),H^{-\frac{1}{2}}(S))}\tau^{-n}\right].$$
Taking the sum of the above expression with respect to $k=1,\ldots,n$, we get 
$$G'(\lambda_R)\int_\Omega |\nabla_x H(\cdot,y_\tau)|^2dx \leq C\left[\max(1,\tau^{2-\frac{n}{2}})+\tau^{3-n}+\norm{\mathcal D^1_{S,\lambda_R}-\mathcal D^2_{S,\lambda_R}}_{\mathcal B(H^{\frac{1}{2}}_0(S),H^{-\frac{1}{2}}(S))}\tau^{-n}\right].$$
Then, in similar way to Theorem \ref{t1}, applying \eqref{est3}, we find
$$G'(\lambda_R)\tau^{2-n}\leq C\left[\max(1,\tau^{2-\frac{n}{2}})+\tau^{3-n}+\norm{\mathcal D^1_{S,\lambda_R}-\mathcal D^2_{S,\lambda_R}}_{\mathcal B(H^{\frac{1}{2}}_0(S),H^{-\frac{1}{2}}(S))}\tau^{-n}\right],\quad\tau\in(0,\delta).$$
Dividing this inequality by $\tau^{2-n}$, we obtain
$$G'(\lambda_R)\leq C\left[\tau+\norm{\mathcal D^1_{S,\lambda_R}-\mathcal D^2_{S,\lambda_R}}_{\mathcal B(H^{\frac{1}{2}}_0(S),H^{-\frac{1}{2}}(S))}\tau^{-2}\right],\quad \tau\in(0,\delta).$$
From this last estimate and \eqref{t2a} by choosing $\tau=\left(\norm{\mathcal D^1_{S,\lambda_R}-\mathcal D^2_{S,\lambda_R}}_{\mathcal B(H^{\frac{1}{2}}_0(S),H^{-\frac{1}{2}}(S))}\right)^{\frac{1}{3}}$ when $\norm{\mathcal D^1_{S,\lambda_R}-\mathcal D^2_{S,\lambda_R}}_{\mathcal B(H^{\frac{1}{2}}_0(S),H^{-\frac{1}{2}}(S))}$ is sufficiently small, one can easily check that there exists $C>0$ depending only on $\kappa$, $\Omega$, $R$ and $\chi$, such that the following estimate 
\bel{t2j}G'(\lambda_R)\leq C\norm{\mathcal D^1_{S,\lambda_R}-\mathcal D^2_{S,\lambda_R}}_{\mathcal B(H^{\frac{1}{2}}_0(S),H^{-\frac{1}{2}}(S))}^{\frac{1}{3}}\ee
holds true. Combining this with the fact that, according to \eqref{t2a}, we have
$$|G(\lambda)|\leq |G(0)|+|\lambda|\norm{G'}_{L^\infty(-R,R)}\leq RG'(\lambda_R),\quad \lambda\in[-R,R],$$
we deduce \eqref{t2e} from \eqref{t2j}. This completes the proof of the theorem.

\bigskip
\vskip 1cm


\begin{thebibliography}{CS2}
 \frenchspacing

\bibitem{A}
{\sc G. Alessandrini}, {\em Singular solutions of elliptic equations and the determination of conductivity by
boundary measurements}, Journal of Differential Equations, \textbf{84} (1990), 252-272.
\bibitem{CFKKU}  
{\sc C. C\^{a}rstea,  A. Feizmohammadi, Y. Kian, K. Krupchyk, G. Uhlmann}, {\em The Calder\'{o}n inverse problem for isotropic quasilinear conductivities}, Adv. in Math., \textbf{391} (2021),  107956.
\bibitem{CS} {\sc P. P. Castaneda, P. Suquet}, {\em Nonlinear composites}, Advances in
Applied Mechanics, \textbf{34} (1997), 171-302.
\bibitem{CY}{\sc J. Cheng and M. Yamamoto}, {\em One new strategy for a \textit{priori} choice of regularizing
parameters in Tikhonov's regularization}, Inverse Problems, \textbf{16} (2000), L31-L38.
\bibitem{CHY}{\sc M. Choulli, G. Hu, M. Yamamoto}, {\em Stability estimate for a semilinear elliptic inverse problem}, Nonlinear Differ. Equ. Appl., \textbf{28}, 37 (2021).
\bibitem{CK}{\sc M. Choulli, Y. Kian}, {\em Logarithmic stability in determining the time-dependent zero order coefficient in a parabolic equation from a partial Dirichlet-to-Neumann map. Application to the determination of a nonlinear term}, J. Math. Pures Appl., 	\textbf{114} (2018), 235-261.
\bibitem{EPS1}{\sc H. Egger, J-F. Pietschmann, M. Schlottbom}, {\em Simultaneous identification of diffusion
and absorption coefficients in a quasilinear elliptic problem}, Inverse Problems, \textbf{30} (2014), 035009.
 \bibitem{EPS2}{\sc H. Egger, J-F. Pietschmann, M. Schlottbom}, {\em On the uniqueness of nonlinear diffusion coefficients in the presence of lower order terms},  Inverse Problems, \textbf{33} (2017), 115005.
\bibitem{FO20}{\sc A. Feizmohammadi, L. Oksanen}, {\em An inverse problem for a semilinear elliptic equation in
Riemannian geometries}, Journal of Differential Equations, \textbf{269} (2020), 4683-4719.
\bibitem{F}{\sc T. D. Frank}, {\em Nonlinear Fokker-Planck Equations Fundamentals and Applications},  Springer-Verlag, Berlin Heidelberg, 2005.
\bibitem{GT}{\sc D. Gilbarg, N. Trudinger}, {\em Elliptic partial differential equations of second order}, Springer-Verlag, Berlin, revised third edition, 2001.
\bibitem{GR} \sc{R. Glowinski and J.  Rappaz}, {\em Approximation of a nonlinear elliptic
problem arising in a non-Newtonian fluid flow model in glaciology}, ESAIM:
Mathematical Modelling and Numerical Analysis, \textbf{37} (2003), 175-186.
\bibitem{H} {\sc L. H\"ormander},  {\em The boundary problems of physical geodesy},
Arch. Rational Mech. Anal. \textbf{62} (1976), no. 1, 1-52.
\bibitem{ImYa}{\sc O. Imanuvilov, M. Yamamoto}, {\em Unique determination of potentials and semilinear terms of
semilinear elliptic equations from partial Cauchy data}, Journal of Inverse and Ill-Posed Problems, \textbf{21}
(2013), 85-108.
\bibitem{Is1}
{\sc V. Isakov}, {\em On uniqueness in inverse problems for semilinear parabolic equations}, Arch.
Rat. Mech. Anal., \textbf{124} (1993), 1-12.
\bibitem{Is2}
{\sc V. Isakov}, {\em Uniqueness of recovery of some quasilinear partial differential equations}, Comm. PDE,
\textbf{26} (2001), 1947-1973.
\bibitem{Is3}
{\sc V. Isakov}, {\em Uniqueness of recovery of some systems of semilinear partial differential equations}, Inverse
Problems, \textbf{17} (2001), 607-618.
\bibitem{Is5}
{\sc V. Isakov, A. Nachman}, {\em Global uniqueness for a two-dimensional semilinear elliptic inverse problem}, Trans. Amer. Math. Soc., \textbf{347} (1995), 3375-3390.
\bibitem{Is4}{\sc V. Isakov, J. Sylvester}, {\em Global uniqueness for a semilinear elliptic inverse problem}, Comm.
Pure Appl. Math., \textbf{47} (1994), 1403-1410.
\bibitem{Ka}{\sc H. Kalf}, {\em On E. E. Levi's method of constructing a fundamental solution for second-order elliptic equations}, Rendiconti del Circolo Matematico di Palermo, serie II (1992), pp. 251-294. 
\bibitem{KKL}{\sc A. Katchalov, Y. Kurylev, M. Lassas}, {\em Inverse Boundary Spectral Problems}, Chapman \& Hall/CRC, Boca Raton, 2001.
 \bibitem{KKU}{\sc Y. Kian, K. Krupchyk, G. Uhlmann}, {\em Partial data inverse problems for quasilinear conductivity equations}, to appear in Math. Ann., DOI:10.1007/s00208-022-02367-y. 
 \bibitem{KU0} 
 {\sc K. Krupchyk, G. Uhlmann}, {\em Partial data inverse problems for semilinear elliptic equations with gradient nonlinearities}, Math. Res. Lett., \textbf{27} (2020), no. 6, 1801-1824.
 \bibitem{KU}
 {\sc K. Krupchyk, G. Uhlmann}, {\em A remark on partial data inverse problems for semilinear
elliptic equations}, Proc. Amer. Math. Soc., \textbf{148} (2020), no. 2, 681-685.
 \bibitem{KLU} {\sc Y. Kurylev, M. Lassas, G. Uhlmann}, {\em Inverse problems for Lorentzian manifolds and nonlinear
hyperbolic equations}, Inventiones mathematicae, \textbf{212} (2018), 781-857.
\bibitem{LU}{\sc O. A. Ladyzhenskaja and N. N. Ural'tzeva}, {\em Linear and Quasilinear Elliptic
Equations}, Academic Press: New York–London, 1968; 495 pp.
\bibitem{LLPT}
{\sc M. Lassas, T. Liimatainen, L. Potenciano-Machado, T. Tyni}, {\em Stability estimates for inverse problems for semilinear wave equations on Lorentzian manifolds}, arXiv preprint (2021).
\bibitem{LLLS}
{\sc M. Lassas, T. Liimatainen, Y. H. Lin, M. Salo}, {\em Inverse problems for elliptic equations
with power type nonlinearities}, J. Math. Pures Appl., \textbf{145} (2021), 44--82.
\bibitem{MU}{\sc  C. Munoz and G. Uhlmann}, {\em   The Calder\'on problem for quasilinear elliptic equations}, Annales de l'Institut Henri Poincar\'e C, Analyse non lin\'eaire, \textbf{37} (2020), 1143-1166. 
\bibitem{NW}{\sc A. C. Newell and J. A. Whitehead}, {\em Finite bandwidth, finite amplitude convection}, J. Fluid Mech., \textbf{ 38} (1969), 279-303.
\bibitem{SuU}{\sc  Z. Sun and G.  Uhlmann}, {\em Inverse problems in quasilinear anisotropic
media}, Amer. J. Math., \textbf{119} (1997), 771-799.
\bibitem{Sh} {\sc R. Shankar}, {\em Recovering a quasilinear conductivity from boundary measurements}, Inverse Problems, \textbf{27} (2020), 015014.
\bibitem{Sun1}
 {\sc Z. Sun}, {\em On a quasilinear inverse boundary value problem}, Math. Z., \textbf{221} (1996), no. 2,
293-305.
\bibitem{SuUh}
{\sc Z. Sun, G. Uhlmann}, {\em Inverse problems in quasilinear anisotropic media}, Amer. J. Math.,
\textbf{119} (1997), 771-797.
\bibitem{ZF}{\sc Y. B. Zeldovich and D. A. Frank-Kamenetsky}, {\em A theory of thermal propagation of flame}, Acta Physicochim URSS, \textbf{9}  (1938), 341-350.
 

\bibliographystyle{abbrv}

\end{thebibliography}
\end{document}